\documentclass[preprint]{article}
\pdfoutput=1
\usepackage{graphicx} 
\usepackage{graphicx}
\usepackage{hyperref}
\usepackage{xcolor}
\usepackage{amsthm}
\usepackage{amsmath}
\usepackage{amssymb}
\usepackage[english]{babel}
\usepackage{multicol}
\usepackage{placeins}
\usepackage{tabularx}
\usepackage{geometry}
\title{On the spectral radius properties of a key matrix in periodic impulse control}

\newcommand{\reals}{\mathbb{R}}
\newcommand{\naturals}{\mathbb{N}}

\newcommand{\exponent}{\operatorname{e}}

\newtheorem{stel}{Theorem}
\newtheorem{prop}{Proposition}
\newtheorem{gevolg}{Corollary}
\newtheorem{lemma}{Lemma}

\newtheorem{defi}{Definition}

\newcommand{\deta}{$D\exponent^{\tau A} $}
\newcommand{\rdeta}{r(D\exponent^{\tau A})}

\begin{document}

\maketitle

\begin{center}
Swati Patel\\
Department of Mathematics\\
Oregon State University\\
Corvallis, OR 97331\\
\end{center}

\begin{center}
Patrick De Leenheer\\
Department of Mathematics\\
Oregon State University\\
Corvallis, OR 97331\\
\end{center}

\section*{Abstract}
In this work, we consider the periodic impulse control of a system modeled as a set of linear differential equations.  We define a matrix  that governs the qualitative behavior of the controlled system.  This matrix depends on the period and effects of the control interventions.  We investigate properties of the spectral radius of this matrix and in particular, how it depends on the period of the interventions.  Our main result is on the convexity of the spectral radius with respect to this period.  We discuss implications of this convexity on establishing an optimal and maximum period for effective control. Finally, we provide an example motivated from a real-life scenario.

\section*{Keywords}
\noindent Impulse differential equations, spectral radius, matrix perturbations, biological control, convexity

\section{Introduction}
The main result of this paper is on the convexity of the spectral radius of the matrix \deta, with respect to the scalar $\tau$ under certain conditions on the matrices $D$ and $A$. This matrix arises naturally from considering a linear system with $\tau$-periodic impulse control and its spectral radius allows us to distinguish between distinct qualitative behaviors. Hence, understanding the behavior of the spectral radius with respect to $\tau$ is important for identifying effective and efficient control strategies. 

The convexity of the spectral radius (or eigenvalues) of a given matrix with respect to specific elements or matrix parameters has been a topic of mathematical interest and examined in other contexts. \cite{cohen1978} showed that the spectral radius of a non-negative matrix is convex with respect any single diagonal element and provides applications in demography. These results were extended to show convexity of the spectral radius of non-negative matrices with respect to all diagonal elements by \cite{cohen1979}, \cite{cohen1981}, and \cite{friedland81}, each using distinct approaches which rely on the Feynman-Kac formula, Trotter product forumla \cite{trotter}, and the Donsker-Varadhan variational formula \cite{donsker}, respectively.   Additionally, \cite{friedland81} showed the log convexity of the spectral radius of $\exponent^{D}A$ for non-negative matrices $A$ with respect to diagonal matrices $D$. While these previous convexity results focused on non-negative matrices, our results consider a different subset of matrices, namely diagonally symmetrizable matrices, and we focus on convexity with respect to a different parameter. 

Our problem was motivated from ongoing control interventions of a real biological scenario: massive drug administration (MDA) practices to eliminate parasitic helminth diseases. For this scenario, we have a linear population model, structured into discrete spatial/life classes, in which the control interventions impact the classes differently.  Additionally, we want to know how often to implement control interventions to drive both harmful classes to zero.  

Parasitic helminths are a class of worms that infect and inflict harm on their hosts, including humans \cite{WHO} and several animal species of agricultural importance \cite{Kao}.  These parasites typically spend part of their life cycle in an environment external to the hosts developing before finding their way into a host, where they reproduce and release offspring back into the external environment. Hence, there is exchange between these two spatial locations.
In the 1980s, public health organizations began implementing MDA, the preemptive distribution of anthelmintic drugs to populations where the disease was thought to be prevalent.  The World Health Organization has developed guidelines that recommend MDA periodically 1-2 times a year depending on the prevalence of the disease \cite{WHO_PC}. As drugs are given to hosts, this control intervention kills a subset of the parasitic population within hosts, while the population within the external environment is untouched. 

This example motivated us to examine the spectral radius of the matrix \deta, where $D$ is a diagonal matrix that captures the impact of the impulse control intervention on the distinct classes, $A$ is a matrix that represents the continual transitions between classes coming from the linear model, and $\tau$ is the period of the control interventions. In the following section, we provide more details on the general setting and the extraction of this matrix as well as some assumptions. In section \ref{section_results}, we give the main convexity result.  In section \ref{section_implications}, we give implications of the main result, in particular as they relate to the practical applications. In section \ref{section_examples}, we return to our motivating example and provide insight on their control by directly applying our result. Finally, in section \ref{section_discussion}, we frame these results in a broader context and provide some open questions. 

\section{Setting}\label{section_setting}
\subsection{Model}
Let $x(t)$ be the vector of $N$ state variables at time $t$, which are governed naturally by a linear differential equation, captured in the $N\times N$ matrix $A$.  It is well-known that if $A$ has an eigenvalue with positive real part, then the system is unstable and most solutions will grow unbounded as $t\to +\infty$. 

Periodic impulsive control is a specific control protocol in which control interventions occur at periodically spaced times $n\tau$, where $n$ is a non-negative integer, and where $\tau>0$ is the period of the control. Here we shall consider the scenario where, at the control instants, the current state of the system is modified multiplicatively and instantaneously.
Altogether, we express the dynamics of $x$ as the following impulse differential equation:

\begin{equation}\label{model}
\begin{aligned} 
     \frac{dx}{dt} &= Ax \hspace{2cm} t\not \in n\tau \\
    x(n\tau^+) &= D x(n\tau^-) 
\end{aligned}
\end{equation}
where $x(n\tau^+)$ and $x(n\tau^-)$ denote right and left limits of $x(t)$ as $t$ approaches the control time $n\tau$ from the right and left respectively. Here, $D$ is a fixed $N\times N$ diagonal matrix with positive entries, which often take values in the interval $(0,1]$. 
In this case, the control amounts to re-initializing the state components $x_i$ to a certain fraction $D_{ii}$ of their current value. A state component $x_i$ is uncontrolled if $D_{ii}=1$. Some of the $D_{ii}$ could coincide, but they could also all be distinct.

The main control objective is to stabilize the zero solution of the controlled system. We will soon show that this happens if the spectral radius of the matrix\footnote{The spectral radius of a matrix $A$ is defined as $\max \{|\lambda| \,|\, \lambda \textrm{ is an eigenvalue of }A\}$.}
$
D\exponent^{\tau A}
$, which we denote $\rdeta$,
is less than one. Note that there are two control parameters to achieve this goal: the period $\tau$ between consecutive control interventions, and the diagonal matrix $D$ which encodes how much the state is affected by the control. Thus, we are interested in 
understanding the behavior of the map $(\tau,D)\to r(D\exponent^{\tau A})$, and in particular whether the range of this map includes values below $1$. A secondary control objective might be to minimize 
$r(D\exponent^{\tau A})$, especially when the aim is to stabilize the system as efficiently as possible, i.e., optimize the control.

In this paper, we focus on the restricted scenario in which the matrix $D$ is fixed, and only the period $\tau$ is variable. In other words, we shall consider the map $\tau \to r(D\exponent^{\tau A})$, assuming that $D$ (and of course $A$) is fixed. In many practical applications, $D$ is indeed fixed. For example, some of the diagonal entries of $D$ might reflect efficiencies of existing drugs used to treat a disease, or the potencies of insecticides used to protect an agricultural crop. In such cases, the only control parameter available for tuning is the period $\tau$. For scenarios where $\tau$ is fixed, but $D$ is tunable, we refer to the theory developed in \cite{friedland81,cohen1981} which shows how 
$r(D\exponent^{\tau A})$ varies qualitatively with respect to $D$.

To conclude this section, we discuss how the periodic impulsive control formalism can be analyzed by means of Floquet's theory \cite{chicone} and lead us to consider $\rdeta$. 
We will show that the behavior of the impulsively controlled model (\ref{model}) can also be described by that of the periodically time-varying system
$$
{\dot y}=B(t)y,
$$
where
$$
B(t)=\begin{cases} 
\ln(D),\textrm{ if } t\in [n\tau, n\tau+1)\\
A,\textrm{ if } t\in [n\tau +1,(n+1)\tau + 1)
\end{cases}
$$
for all non-negative integers $n$, and 
$\ln(D)$ is a diagonal matrix defined by $[\ln D]_{ii}=\ln(D_{ii})$ for all $i=1,\dots,N$. Note that $B(t)$ is periodic with period $\tau+1$, and piece-wise constant with time $t$.

We have replaced the instantaneous control map which maps the state $x(n\tau -)$ of the impulsively controlled system to the state $x(n\tau +)=Dx(n \tau -)$, by a continuous-time system ${\dot y}=\ln(D)y$ which acts on the time intervals $[n\tau, n\tau +1)$. During these intervals, the state $y(n\tau )$ evolves to the state   $y(n\tau +1)=\exponent^{\ln(D)}y(n\tau )=Dy(n\tau)$. Therefore, the image of the state $y(n\tau)$ to $y(n\tau+1)$ for the above periodically time-varying system is the same as the map of $x(n\tau -)$ to $x(n\tau+)=Dx(n\tau -)$ for the impulsively controlled system. Also, note that if $x(t)$ is a solution of the impulsively controlled system, defined on some interval $[n\tau,(n+1)\tau]$, then $x(t)$ coincides with a solution $y(t)$ of the system ${\dot y}=B(t)y$, but defined on the interval $[n\tau +1,(n+1)\tau +1]$, whenever the initial condition of both functions coincides. Indeed, on these intervals, both functions are solutions of the same linear system ${\dot z}=Az$, which is known to have unique solutions for every initial condition.

The advantage is that we now can appeal to Floquet's theory to analyze the periodic linear system ${\dot y}=B(t)y$. It has the following principal fundamental matrix solution 
$$
\phi(t)=\begin{cases}\exponent^{t\ln(D)},\textrm{ if }t \in [0, 1) \\
\exponent^{(t-1)A}D,\textrm{ if } t\in [1,1+\tau]
\end{cases},
$$
and the monodromy matrix associated to $\phi(t)$ is
$$
\phi^{-1}(0)\phi(\tau+1)=\exponent^{\tau A}D.
$$

The eigenvalues of this monodromy matrix are the characteristic multipliers. If all characteristic multipliers have modulus less than $1$, or equivalently\footnote{For any pair of $N\times N$ matrices $A$ and $B$, the eigenvalues, and hence the spectral radius, of $AB$ and $BA$ coincide.}, if $r(\exponent^{\tau A}D)=r(D\exponent^{\tau A})<1$, then the zero solution of ${\dot y}=B(t)y$ is asymptotically stable. In this case, the zero solution of the original impulsively controlled system (\ref{model}) is asymptotically stable as well. Hence, we are motivated to understand the properties of $\rdeta$.

\subsection{Definitions and Preliminaries}
To set up our main result, we provide some definitions and useful preliminaries. We begin with a brief discussion of diagonally symmetrizable matrices and then give three different notions of convexity, which we use in our main results.

\begin{defi}
A real $N\times N$ matrix $A$ is diagonally symmetrizable \footnote{Our definition and terminology differ slightly from those in \cite{mckee}, but it is not hard to show that both definitions are equivalent.} if there exists a real, invertible diagonal matrix $T$ such that $T^{-1}AT$ is a symmetric matrix.
\end{defi}

As a diagonally symmetrizable matrix is similar to a symmetric matrix, all its eigenvalues must be real. This is a strong restriction on such matrices. On the other hand, we will see that they can be characterized in an elegant way, enabling us to recognize them fairly easily.

For any real number $x$ we define the sign function
$$
\textrm{sgn}(x)=\begin{cases}
+1,\textrm{ if }x>0\\
0,\textrm{ if }x=0\\
-1,\textrm{ if }x<0
\end{cases}
$$

\begin{defi}
A real $N\times N$ matrix $A$ is sign-symmetric if 
$\textrm{sgn}(A_{ij})=\textrm{sgn}(A_{ji})$, for all $i\neq j$.
\end{defi}

It follows from the definition that if $A$ is diagonally symmetrizable, then there is an invertible diagonal matrix $T$ such that
\begin{equation}\label{balance}
A_{ij}=A_{ji}\left(\frac{T_{ii}}{T_{jj}}\right)^2,\textrm{ for all }i\neq j.
\end{equation}
This implies that $\textrm{sgn}(A_{ij})=\textrm{sgn}(A_{ji})$, for all $i\neq j$, and therefore $A$ is sign-symmetric. Furthermore, for any distinct elements $i_1,\dots,i_k$ in $\{1,\dots,N\}$,  
it follows from $(\ref{balance})$ that
\begin{equation}\label{cycle}
A_{i_1i_2}A_{i_2i_3}\dots A_{i_{k-1}i_k}A_{i_k i_1}=
A_{i_1 i_k}A_{i_k i_{k-1}}\dots A_{i_3 i_2}A_{i_2 i_1},
\end{equation}
known as the cycle condition \cite{mckee}. To justify this terminology we offer a simple graphical interpretation: Associating a directed and weighted graph to $A$ by drawing an edge from vertex $i$ to vertex $j$ with weight $A_{ij}$, the cycle condition $(\ref{cycle})$ expresses that in any directed cycle $i_1\to i_2 \to \dots \to i_{k-1} \to i_k \to i_1$ in this graph, the weight of the cycle (defined as the product of the weights of all the directed edges in the cycle) equals the weight of the reversed cycle $i_1\to i_k \to i_{k-1}\to \dots i_2 \to i_1$.

Remarkably, it turns out that if a sign-symmetric matrix satisfies $(\ref{cycle})$, then it is diagonally symmetrizable \cite{mckee}. In summary, the following elegant characterization of diagonally symmetrizable matrices is available.
\begin{prop} (Proposition 5.15 in \cite{mckee}) \label{diag-symm-char}
A real $N\times N$ matrix $A$ is diagonally symmetrizable if and only if 
$A$ is sign-symmetric and satisfies the cycle condition $(\ref{cycle})$.
\end{prop}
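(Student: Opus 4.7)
One direction of the equivalence is already essentially contained in the discussion preceding the proposition: if $A$ is diagonally symmetrizable then identity (\ref{balance}) gives sign-symmetry immediately, and multiplying (\ref{balance}) around a cycle of distinct indices makes the factors $(T_{ii}/T_{jj})^2$ telescope to $1$, producing the cycle condition (\ref{cycle}). So the real work lies in the converse, and the rest of this sketch addresses that direction.

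For the converse, I would assume $A$ is sign-symmetric and satisfies (\ref{cycle}), and construct a real diagonal $T=\diag(t_1,\dots,t_N)$ with $t_i>0$ such that $A_{ij}t_j^2 = A_{ji}t_i^2$ for all $i\neq j$; this is equivalent to $T^{-1}AT$ being symmetric. When $A_{ij}=0$, sign-symmetry forces $A_{ji}=0$ and the equation is automatic, while when $A_{ij}\neq 0$, sign-symmetry makes $A_{ij}/A_{ji}>0$, so the positive square root $\sqrt{A_{ij}/A_{ji}}$ is a well-defined positive real.

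My strategy is a spanning-tree construction. First I would associate to $A$ the undirected graph $G$ on $\{1,\dots,N\}$ in which $i$ and $j$ are adjacent precisely when $A_{ij}\neq 0$, and then handle one connected component at a time. In a given component I would choose a spanning tree and a root $r$, set $t_r=1$, and propagate along tree edges by defining $t_w := t_v\sqrt{A_{wv}/A_{vw}}$ whenever $v$ is the tree-parent of $w$; for isolated vertices I would set $t_i=1$. By construction the required identity $A_{ij}t_j^2=A_{ji}t_i^2$ then holds on every tree edge.

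The only step that remains is verifying the identity on non-tree edges, and this is where the cycle condition does the real work. For a non-tree edge $\{i,j\}$ in a component, let $i=w_0,w_1,\dots,w_l=j$ be the tree path from $i$ to $j$. Telescoping the recursive definition gives $(t_i/t_j)^2 = \prod_{k=1}^{l}A_{w_{k-1}w_k}/A_{w_k w_{k-1}}$. Closing this path with the directed edge from $j$ back to $i$ produces a simple directed cycle on distinct vertices to which (\ref{cycle}) applies, and rearranging its conclusion yields precisely $\prod_{k=1}^{l}A_{w_{k-1}w_k}/A_{w_k w_{k-1}}=A_{ij}/A_{ji}$, completing the verification. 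The trickiest part, I expect, will not be any deep argument but rather the index bookkeeping needed to line up the telescoping product with the form in which (\ref{cycle}) is stated, together with a careful choice of the positive square-root branch so that the construction is globally consistent across the spanning tree.
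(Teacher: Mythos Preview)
The paper does not actually prove this proposition; it is quoted from \cite{mckee}, with only the forward direction sketched in the discussion preceding the statement. Your argument for the converse is correct and is the standard spanning-tree construction one expects for such a characterization: define the $t_i$ along a spanning tree of each connected component of the support graph, then use the cycle condition to verify consistency on the remaining edges. One small remark: your closing worry about ``a careful choice of the positive square-root branch'' is unnecessary, since sign-symmetry makes each ratio $A_{wv}/A_{vw}$ strictly positive and hence its positive square root is unique; consistency along the tree is then automatic precisely because trees contain no cycles, and the cycle condition handles the non-tree edges exactly as you describe.
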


\begin{defi}
An $N\times N$ matrix $A$ is tri-diagonal if $A_{ij}=0$ for all $|i-j|>1$.
\end{defi}
Proposition $\ref{diag-symm-char}$ immediately implies
\begin{gevolg}\label{diag-symm-tri}
Every sign-symmetric, tri-diagonal matrix is diagonally symmetrizable.
\end{gevolg}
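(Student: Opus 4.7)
The plan is to invoke Proposition \ref{diag-symm-char}: since sign-symmetry is already part of the hypothesis, it suffices to verify the cycle condition (\ref{cycle}) for a sign-symmetric tri-diagonal matrix $A$. I would split this into the trivial case $k=2$, where (\ref{cycle}) reduces to $A_{i_1 i_2} A_{i_2 i_1} = A_{i_1 i_2} A_{i_2 i_1}$, and the case $k\ge 3$, where I claim both sides of (\ref{cycle}) are identically zero.

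For $k\ge 3$, the key combinatorial observation is that a tri-diagonal matrix corresponds to a path graph $1\!-\!2\!-\!\cdots\!-\!N$, which contains no cycles on $\ge 3$ distinct vertices. To make this precise, suppose $i_1,\dots,i_k$ are distinct and that the forward product $A_{i_1 i_2}A_{i_2 i_3}\cdots A_{i_k i_1}$ is nonzero. Then every factor $A_{i_j i_{j+1}}$ (indices cyclic) is nonzero, so by tri-diagonality $|i_j - i_{j+1}|\le 1$, and since the $i_j$ are distinct, $|i_j - i_{j+1}|=1$. Hence the cyclic sequence $i_1,i_2,\dots,i_k,i_1$ consists of $\pm 1$ steps that sum to zero. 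I would then argue that distinctness forces the direction to be constant: if $i_2=i_1+1$, then $i_3\in\{i_1,i_1+2\}$, and $i_3=i_1$ is excluded, so $i_3=i_1+2$; iterating, $i_j = i_1+(j-1)$ for all $j$. But then the closing edge requires $|i_k - i_1|=|k-1|=1$, forcing $k=2$, a contradiction.

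Therefore for $k\ge 3$ some factor $A_{i_j i_{j+1}}$ satisfies $|i_j-i_{j+1}|\ge 2$ and so vanishes by tri-diagonality, making the left-hand side of (\ref{cycle}) zero. The same index pair appears (reversed) in the right-hand side, and by tri-diagonality $A_{i_{j+1} i_j}=0$ as well, so the right-hand side also vanishes. Hence (\ref{cycle}) holds in all cases, and Proposition \ref{diag-symm-char} yields diagonal symmetrizability.

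The main (and quite mild) obstacle is the combinatorial step showing that a cycle of length $\ge 3$ through distinct vertices is impossible in the path graph of a tri-diagonal matrix; once phrased in terms of unit steps on the integer line with distinctness, this reduces to the monotonicity argument above. Note that sign-symmetry is actually not used in the cycle verification itself, but is needed (together with the cycle condition) as input to Proposition \ref{diag-symm-char}.
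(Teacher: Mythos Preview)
Your proof is correct and follows exactly the route the paper intends: the paper simply states that Proposition~\ref{diag-symm-char} ``immediately implies'' the corollary, and you have supplied the omitted verification that the cycle condition~(\ref{cycle}) holds (trivially for $k=2$, and with both sides vanishing for $k\ge 3$ since a path graph admits no cycles on three or more distinct vertices). Your combinatorial argument for the $k\ge 3$ case is sound, and your observation that sign-symmetry is needed only to feed into Proposition~\ref{diag-symm-char} is accurate.
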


The following provides a sense of how restrictive it is for a matrix to be diagonally symmetrizable. It turns out that it is not very restrictive for matrices of size two, but is restrictive for matrices of size larger than two, in a sense made precise below.
\begin{itemize}
\item
We say that a real $N\times N$ matrix is strictly cooperative\footnote{Cooperative matrices are also known as Metzler matrices, especially in  control theory.} (respectively strictly competitive) if $A_{ij}>0$ (respectively $A_{ij}<0$) for all $i\neq j$. 
Note that being a strictly cooperative (or competitive) matrix is an open condition: Any strictly cooperative (or competitive) matrix has a neighborhood of strictly cooperative (competitive)  matrices, under a standard topology. 

Strictly cooperative and competitive matrices frequently occur in the analysis of biological and chemical systems. 

Corollary $\ref{diag-symm-tri}$ implies that all strictly cooperative and all strictly competitive $2\times 2$ matrices are diagonally symmetrizable.
\item Assume that $A$ is a $3\times 3$ matrix which is sign-symmetric. 
For instance, $A$ may be strictly cooperative or competitive, although it doesn't have to be of either type; for example, $A_{12}$ and $A_{21}$, and $A_{13}$ and $A_{31}$ could all be positive, while $A_{23}$ and $A_{32}$ could be negative.

Proposition $\ref{diag-symm-char}$ then implies that $A$ is diagonally symmetrizable if and only if 
$$
A_{12}A_{23}A_{31}=A_{13}A_{32}A_{21}.
$$
It follows that being a diagonally symmetrizable $3\times 3$ matrix is non-generic. Indeed, every neighborhood of such a matrix contains matrices which violate the above cycle condition.
\end{itemize}
For more on properties and the structure of diagonally symmetrizable matrices, see \cite{mckee}.

To state our main convexity results, we define three commonly used notions of convexity; further relevant properties of these notions are reviewed in the Appendix.

\begin{defi}
    Let $C$ be a convex set in $\mathbb{R}^N$.  A function $f: C \rightarrow \mathbb{R}$ is
    \begin{itemize}
        \item \textbf{convex} if for all $x_1, x_2 \in C$ and $\lambda \in [0,1]$
        $$
        f(\lambda x_1 +(1-\lambda)x_2) \leq \lambda f(x_1) +(1-\lambda)f(x_2)
        $$
        \item \textbf{strictly convex} if for all $x_1\neq x_2 \in C$ and $\lambda \in (0,1)$
        $$
        f(\lambda x_1 +(1-\lambda)x_2) < \lambda f(x_1) +(1-\lambda)f(x_2)
        $$
        \item \textbf{strongly convex} with parameter $m>0$ if $f(x) - \frac{m}{2}||x||^2$ is convex.
    \end{itemize}
\end{defi}

Here, $||x||$ denotes the Euclidean norm of the vector $x$. 
These characterizations are ordered from weakest to strongest. It is easy to see that strict convexity implies convexity and that strong convexity implies strict convexity. None of the converse implications hold. For example, if $f:\reals \to  \reals$ is defined as $f(x)=x$, then $f$ is convex, but not strictly convex. If $f: (-1,1)\to \reals$ is defined as $f(x)=x^4$, then $f$ is strictly convex, but we show in the Appendix that $f$ is not strongly convex for any positive parameter $m$.

\section{Main Results}\label{section_results}

We are interested in the properties, including the convexity, of the map $\tau \rightarrow r(De^{\tau A})$, because of its implications for identifying how frequently one needs to intervene to effectively and efficiently control the state variables. 



The main technical result of this paper is as follows.
\begin{stel} \label{main}
Assume that
\begin{enumerate}
    \item $D$ is an $N\times N$ diagonal matrix with positive diagonal entries, and 
    \item $A$ is a diagonally symmetrizable $N\times N$ matrix.
\end{enumerate}
Then the map $\tau \to r(D\exponent^{\tau A})$, is convex for $\tau \in [0,+\infty)$.

If in addition, $A$ is non-singular, then for every $p>0$, there exists some 
$m_p>0$ such that the map $\tau \to r(D\exponent^{\tau A})$ is strongly convex with parameter $m_p$ for $\tau\in [0,p]$.
\end{stel}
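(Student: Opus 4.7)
The plan is to reduce everything to the largest eigenvalue of a symmetric matrix and then exploit the variational characterization, since the statement really is a statement about the largest eigenvalue of a symmetric positive definite matrix.

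First I would use the diagonal symmetrizability of $A$ to write $A=TST^{-1}$ with $T$ invertible diagonal and $S$ symmetric, so that $\exponent^{\tau A}=T\exponent^{\tau S}T^{-1}$. Because $D$ and $T$ are both diagonal they commute, so conjugation by $T$ gives
\[
r(D\exponent^{\tau A})=r(DT\exponent^{\tau S}T^{-1})=r(T^{-1}DT\exponent^{\tau S})=r(D\exponent^{\tau S}).
\]
Since $D$ has positive diagonal entries, $D^{1/2}$ is well-defined. Cyclic invariance of the spectrum then gives $r(D\exponent^{\tau S})=r(D^{1/2}\exponent^{\tau S}D^{1/2})$, and the matrix $M(\tau):=D^{1/2}\exponent^{\tau S}D^{1/2}$ is symmetric positive definite. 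Its spectral radius is its largest eigenvalue, which admits the Rayleigh-quotient representation
\[
r(D\exponent^{\tau A})=\lambda_{\max}(M(\tau))=\max_{\|v\|=1} v^\top D^{1/2}\exponent^{\tau S}D^{1/2}v=\max_{\|v\|=1} f_v(\tau),
\]
where $f_v(\tau):=(D^{1/2}v)^\top \exponent^{\tau S}(D^{1/2}v)$.

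Next I would diagonalize $S=U\Lambda U^\top$ with real eigenvalues $\lambda_1,\dots,\lambda_N$ and orthonormal columns. Writing $z=U^\top D^{1/2}v$ yields $f_v(\tau)=\sum_{i=1}^N z_i^2\,\exponent^{\tau\lambda_i}$, which is a non-negative linear combination of convex exponentials, hence convex in $\tau\ge 0$. Since the pointwise supremum of convex functions is convex, $\tau\mapsto r(D\exponent^{\tau A})$ is convex on $[0,\infty)$, giving the first assertion.

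For the second assertion, assume $A$ is non-singular, so every $\lambda_i\ne 0$. I would uniformly lower-bound $f_v''(\tau)$ over $\|v\|=1$ and $\tau\in[0,p]$. Computing $f_v''(\tau)=\sum_i z_i^2\lambda_i^2\,\exponent^{\tau\lambda_i}$, set $\sigma:=\min_i \lambda_i^2>0$, $L:=\max_i|\lambda_i|$, and $d_{\min}:=\min_i D_{ii}>0$. Then $\sum_i z_i^2=\|D^{1/2}v\|^2\ge d_{\min}$ and $\exponent^{\tau\lambda_i}\ge \exponent^{-pL}$ on $[0,p]$, so
\[
f_v''(\tau)\ge m_p:=\sigma\,d_{\min}\,\exponent^{-pL}>0\quad\text{for all }\|v\|=1,\ \tau\in[0,p].
\]
Hence each $f_v(\tau)-\tfrac{m_p}{2}\tau^2$ is convex; taking the supremum over $\|v\|=1$ preserves convexity and commutes with subtracting $\tfrac{m_p}{2}\tau^2$, so $\tau\mapsto r(D\exponent^{\tau A})-\tfrac{m_p}{2}\tau^2$ is convex on $[0,p]$, which is the definition of strong convexity with parameter $m_p$.

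The main obstacle I anticipate is conceptual rather than computational: one must notice that diagonal symmetrizability can be leveraged together with the fact that $D$ is itself diagonal to strip the conjugating matrix $T$ entirely from the spectral radius, reducing the problem to the symmetric case where Rayleigh quotients apply. Once that reduction is made, the remaining steps are standard: convexity of pointwise suprema, and a uniform Hessian lower bound obtained from the non-vanishing of the eigenvalues of $A$ (which is where the hypothesis of non-singularity is essential, since otherwise a zero eigenvalue of $S$ would allow $f_v''$ to vanish along that direction).
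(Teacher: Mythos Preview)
Your proposal is correct and follows essentially the same approach as the paper: reduce to a symmetric positive definite matrix via the commutation of $D$ and $T$ and the similarity $D\exponent^{\tau S}\sim D^{1/2}\exponent^{\tau S}D^{1/2}$, apply the Rayleigh quotient, diagonalize $S$, and recognize $r(D\exponent^{\tau A})$ as a pointwise supremum of non-negative combinations of exponentials. The only cosmetic difference is that the paper changes variables so that the supremum runs over an ellipsoid $\{y:\langle y,Q^TD^{-1}Qy\rangle=1\}$ and then, for strong convexity, bounds $f_y''$ below by a positive-definite quadratic form minimized over that compact set; your version keeps the unit-sphere constraint on $v$ and produces the explicit constant $m_p=\sigma\,d_{\min}\,\exponent^{-pL}$, which is arguably cleaner.
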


This result is easily shown when $D_{ii}=D_{jj}$ for all $i= 1, ...,N$ and $j = 1, ...,N$.  In this case,

$$
\rdeta = D_{11}e^{\tau \lambda_{\max}(A)}
$$
where for any diagonally symmetrizable matrix $S, \lambda_{\max}(S)$ denotes the largest eigenvalue of $S$. (Recall that since $S$ is diagonally symmetrizable, all eigenvalues of $S$ are real.)
Moreover, if $A$ is non-singular, then $\lambda_{\max}(A)\neq 0$ and this map is either a strictly increasing or strictly decreasing exponential, which is strongly convex on 
every compact interval $[0,p]$.
The more interesting case is when the diagonal elements of $D$ are not all equal (from a practical stand point, when the control intervention impacts different classes differently). The proof is as follows.

\begin{proof}
\begin{itemize}
\item First we will show that $\tau\to r(D\exponent^{\tau A})$ is convex for 
$\tau \in [0,+\infty)$.

Since $A$ is diagonally symmetrizable, there is a real diagonal and invertible matrix $T$ such that the matrix ${\tilde A}:=T^{-1}AT$, is symmetric. Then $D\exponent^{\tau A}$ 
is similar to 
$$
T^{-1}D\exponent^{\tau A}T=D\exponent^{\tau {\tilde A}},
$$
where the equality holds because $T^{-1}$ and $D$ commute as both are  diagonal. 
Let $D^{1/2}$ be the unique, diagonal square root of $D$. Then $D\exponent^{\tau {\tilde A}}$, and hence also $D\exponent^{\tau A}$, is similar to
$$
D^{1/2}\exponent^{\tau {\tilde A}}D^{1/2},
$$
which is a symmetric matrix. 
Furthermore, we claim that $D^{1/2}\exponent^{\tau {\tilde A}}D^{1/2}$ is positive definite. To see why, let $\lambda_i$, $i=\dots,N$, 
be the eigenvalues of ${\tilde A}$ (and also of $A$). Note that since ${\tilde A}$ is symmetric, all its eigenvalues are real and additionally, $\exponent^{\tau {\tilde A}}$ is also symmetric. By the Spectral Mapping Theorem, $\exponent^{\tau\lambda_i}$, $i=1,\dots,N$, are 
the eigenvalues of $\exponent^{\tau {\tilde A}}$ and they clearly are all positive for all $\tau \in [0,\infty)$. Thus, $\exponent^{\tau {\tilde A}}$ is positive definite for all $\tau \in [0,\infty)$. But then 
$D^{1/2}\exponent^{\tau {\tilde A}}D^{1/2}$ is also positive definite, as claimed. Consequently,
$$
r(D^{1/2}\exponent^{\tau {\tilde A}}D^{1/2})=\lambda_{\max}(D^{1/2}\exponent^{\tau {\tilde A}}D^{1/2}),
$$
Since $D\exponent^{\tau A}$ is similar to 
$D^{1/2}\exponent^{\tau {\tilde A}}D^{1/2}$, it follows that
$$
r(D\exponent^{\tau A})=\lambda_{\max}(D^{1/2}\exponent^{\tau {\tilde A}}D^{1/2}).
$$

By the Raleigh quotient formula,
$$
r(D\exponent^{\tau A})=\max_{x: \langle x,x \rangle =1} \langle x, D^{1/2}\exponent^{\tau {\tilde A}}D^{1/2} x\rangle,
$$
where $\langle x,y\rangle$ denotes the Euclidean inner product of any two vectors $x$ and $y$ in $\reals^N$.

Since ${\tilde A}$ is symmetric, the Spectral Theorem implies that there exists an orthogonal matrix $Q$ (i.e., $QQ^T=Q^TQ=I$) and a diagonal matrix $\Lambda$ such that
$$
Q^T {\tilde A}Q= \Lambda,\textrm{ and }\Lambda =\begin{pmatrix}\lambda_1& 0& \dots & 0 \\ 0 & \lambda_2 & \dots & 0 \\
\vdots & \vdots & \ddots & \vdots \\
0& 0& \dots & \lambda_n \end{pmatrix},
$$
where the $\lambda_i$ are the (real) eigenvalues of ${\tilde A}$ (and also of $A$). Inserting ${\tilde A}=Q\Lambda Q^T$ in the above Raleigh quotient formula yields
\begin{eqnarray*}
r(D\exponent^{\tau A})&=&\max_{x: \langle x,x \rangle =1} \langle x, D^{1/2}Q\exponent^{\tau \Lambda}Q^TD^{1/2}x \rangle \\
&=&\max_{x: \langle x,x \rangle =1} \langle Q^TD^{1/2}x, \exponent^{\tau \Lambda}Q^TD^{1/2}x \rangle \\
&=&\max_{y:\langle D^{-1/2}Qy,  D^{-1/2}Qy \rangle =1} \langle y,\exponent^{\tau \Lambda} y \rangle \\
&=&\max_{y: \langle y, Q^TD^{-1}Qy \rangle =1} \langle y,\exponent^{\tau \Lambda} y \rangle.
\end{eqnarray*}
For each fixed $y$ in the constraint set $\{y\, |\, \langle y, Q^TD^{-1}Qy \rangle =1\}$, the function 
$$
\tau \to \langle y, \exponent^{\tau \Lambda} y \rangle =\sum_{i=1}^N\exponent^{\lambda_i \tau}y_i^2 
$$ 
is a linear combination of exponential functions with non-negative weights, hence it is a convex function for $\tau \in [0,\infty)$. 
Since the function $\tau \to r(D\exponent^{\tau A})$, where $\tau \in [0,\infty)$, is a point-wise maximum of a collection of convex functions, it is also a convex function by Theorem $\ref{pointwise-max}$ in the Appendix.
\item Next we show that if in addition, $A$ is non-singular, then 
for any fixed $p>0$, the map $\tau \to r(D\exponent^{\tau A})$ is strongly convex with some parameter $m_p>0$, for $\tau \in [0,p]$.

We have shown that
$$
r(D\exponent^{\tau A})=\max_{y: \langle y, Q^TD^{-1}Qy \rangle =1} \langle y,\exponent^{\tau \Lambda} y \rangle,
$$
but here, the diagonal matrix $\Lambda$ is non-singular because its diagonal entries are the eigenvalues of $A$, which is non-singular by assumption.

Fix $p>0$. We will first show that there exists some $m_p>0$ such that for every 
$y$ in the constraint set $\{y\, |\, \langle y, Q^TD^{-1}Qy \rangle =1\}$ (this set is an ellipsoid because $D$ is a diagonal, positive-definite matrix), 
the map $f_y(\tau):=  \langle y,\exponent^{\tau \Lambda} y \rangle$, has the following property:
$$
f_y''(\tau) \geq m_p,\textrm{ for all } \tau \textrm{ in } [0,p].
$$

To see why, we calculate $f''_y$ for every $y$ in the constraint set:
$$
f''_y(\tau) = \sum_{i=1}^N \lambda_i^2\exponent^{\lambda_i \tau}y_i^2.
$$
Since all the $\lambda_i$ are non-zero, we have that 
for all $\tau$ in $[0,p]$
\begin{eqnarray*}
f''_y(\tau) &\geq& \sum_{i:\, \lambda_i <0} \lambda_i^2\exponent^{\lambda_i p}y_i^2 + \sum_{i:\, \lambda_i > 0} \lambda_i^2  y_i^2, \\
 &=&\sum_{i=1}^N b_i^2y_i^2,
\end{eqnarray*}
where for all $i=1,\dots, N$, we defined the positive numbers
$$
b^2_i=\begin{cases}
\lambda_i^2 \exponent^{\lambda_i p},\textrm{ if } \lambda_i<0 \\
\lambda_i^2,\textrm{ if } \lambda_i>0
\end{cases}.
$$
The quadratic form $y\to \sum_{i=1}^N b_i^2 y_i^2$ is clearly positive definite, hence it achieves its minimum $m_p$ over the compact constraint set $\{y\, |\, \langle y, Q^TD^{-1}Qy \rangle =1\}$. And as the latter set does not contain zero, this minimum $m_p$ is positive.

To summarize, we have shown that for each fixed $p>0$, there exists a positive $m_p$ such that $f''_y(\tau)\geq m_p$, for all $\tau$ in $[0,p]$, and for all $y$ in $\{y\, |\, \langle y, Q^TD^{-1}Qy \rangle =1\}$. Theorem $\ref{second-derivative}$ in the Appendix implies that for every $y$ in $\{y\, |\, \langle y, Q^TD^{-1}Qy \rangle =1\}$, the function 
$f_y(\tau)$ is strongly convex with parameter $m_p$, for $\tau$ in $(0,p)$ and thus also for $\tau$ in $[0,p]$. 
Theorem $\ref{pointwise-max}$ in the Appendix then implies that the map $\tau \to r(D\exponent^{\tau A})$ is strongly convex with parameter $m_p$, for $\tau$ in $[0,p]$.
\end{itemize}
\end{proof}

{\bf Remark}: We have seen that $\tau \to r(D\exponent^{\tau A})$ is convex for $\tau \in [0,\infty)$ whenever $D$ is a diagonal matrix with positive diagonal entries, and $A$ is diagonally symmetrizable. As mentioned earlier, the latter condition on $A$ implies that $A$ must have real eigenvalues. Here we give an example of a case where $A$ does not have real eigenvalues and which is such that $\tau \to r(D\exponent^{\tau A})$ is not convex for $\tau \in [0,\infty)$.

Let 
$$
D=\begin{pmatrix}
1&0 \\
0& d
\end{pmatrix}
\textrm{ and } 
A=\begin{pmatrix}
0& -1 \\
1& 0
\end{pmatrix},
$$
for some $d>0$. 
Then
$$
D\exponent^{\tau A}=
\begin{pmatrix}
\cos \tau & -\sin \tau \\
d\sin \tau & d\cos \tau
\end{pmatrix},
$$
and the eigenvalues of $D\exponent^{\tau A}$ are
$$
\lambda_{1,2}(D\exponent^{\tau A})=\begin{cases}
\frac{1+d}{2}\left(1\pm \sqrt{\cos^2 \tau - 4d/(d+1)^2} \right),\textrm{ if } \cos^2 \tau \geq \frac{4d}{(d+1)^2}\\
\frac{1+d}{2}\left(1\pm i\sqrt{4d/(d+1)^2-\cos^2 \tau} \right),\textrm{ if } \cos^2 \tau < \frac{4d}{(d+1)^2}
\end{cases},
$$
which implies that 
$$
r(D\exponent^{\tau A})=
\begin{cases}
\frac{1+d}{2}\left(1+ \sqrt{\cos^2 \tau - 4d/(d+1)^2} \right),\textrm{ if } \cos^2 \tau \geq \frac{4d}{(d+1)^2}\\
\frac{1+d}2\sqrt{\sin^2 \tau+4d/(1+d)^2},\textrm{ if } \cos^2 \tau < \frac{4d}{(d+1)^2}\\
\end{cases}
$$
{\bf Claim}: Assume that $d\neq 1$, and fix some $p\in (0,\pi/4)$ such that $p< \arccos (2\sqrt{d}/(d+1))$. Then $r(D\exponent^{\tau A})$ is strongly concave for $\tau$ in $[0,p]$ (a function $f$ is strongly concave if $-f$ is strongly convex). 

Indeed, note that for $\tau \in [0,p]$, $r(D\exponent^{\tau A})=f(g(\tau))$, where
$$
g(\tau)= \cos^2 \tau - \frac{4d}{(d+1)^2}\textrm{ and } f(x)=\frac{1+d}{2}\left(1+\sqrt x \right).
$$
Also note that $g$ is positive, decreasing and strongly concave on $[0,p]$ because $g(\tau)\geq g(p)>0$, $g'(\tau)=-\sin (2\tau)<0$, and 
$g''(\tau)=-2\cos (2\tau) \leq -2\cos(2p)<0$ for $\tau \in [0,p]$. Furthermore, for $x\in [g(p),g(0)]$, we have that $f'(x)=(1+d)/(4\sqrt{x})\geq (1+d)/(4g(p))>0$ and $f''(x)=-(1+d)/(8x^{3/2})\leq -(1+d)/(8(g(0))^{3/2})<0$.

Since for any twice continuously differentiable functions $f$ and $g$ holds that
$$
\frac{d^2}{d\tau^2} \left(f(g(\tau)) \right) = f''(g(\tau))(g'(\tau))^2+f'(g(\tau))g''(\tau),
$$
the above estimates imply that there is some $m>0$ such that
$$
\frac{d^2}{d\tau^2} \left(r(D\exponent^{\tau A}) \right) = \frac{d^2}{d\tau^2} \left(f(g(\tau)) \right)\leq -m,\textrm{ for all } \tau \in (0,p),
$$
and thus Theorem $\ref{second-derivative}$ in the Appendix implies that $r(D\exponent^{\tau A})$ is strongly concave on $[0,p]$, as claimed.

\section{Implications}\label{section_implications}

Our main convexity result provides useful information on the behavior of the map $\tau \to r(D\exponent^{\tau A})$ and sets us up to find an upper bound and optimal $\tau$ that ensures 0 is asymptotically stable for $(\ref{model})$. The convexity result along with an understanding of the boundary behavior of this map allows us to classify the behavior of the map $\tau \to r(D\exponent^{\tau A})$ based on straight forward properties of $A$ (Figure 1).

The following Lemma \ref{near-zero} and \ref{near-infinity} give a description of the boundary behavior.

\begin{lemma}\label{near-zero}
Assume that
\begin{enumerate}
\item $D$ is diagonal with positive diagonal entries and that 
there is a unique $k$ such that $D_{kk}> D_{ii}$ for all $i\neq k$, and
\item $A$ is a real $N\times N$ matrix.
\end{enumerate}
Then $\tau \to r(D\exponent^{\tau A})$ is continuously differentiable for all sufficiently small $\tau$. Furthermore,
$$
\frac{d}{d\tau} \left(  r(D\exponent^{\tau A} \right) |_{\tau =0}=D_{kk}A_{kk}.
$$
Consequently, if $A_{kk}<0$ (respectively $A_{kk}>0$), then 
$r(D\exponent^{\tau A})$ is strictly decreasing (respectively strictly increasing) for all sufficiently small $\tau$.
\end{lemma}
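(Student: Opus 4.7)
The plan is to exploit the fact that at $\tau=0$ the matrix $De^{\tau A}$ reduces to the diagonal matrix $D$, whose largest eigenvalue $D_{kk}$ is, by hypothesis, simple and strictly dominant in modulus. Standard analytic perturbation theory for a simple eigenvalue then yields a smooth eigenvalue branch through $D_{kk}$, and the derivative formula drops out of the usual first-order perturbation identity.

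First, I would set up the perturbation problem. At $\tau=0$, $De^{\tau A}=D$ has eigenvalues $D_{11},\dots,D_{NN}$, and the uniqueness hypothesis guarantees that $D_{kk}$ is a simple eigenvalue of $D$ whose modulus strictly exceeds that of every other eigenvalue. The corresponding right and left eigenvectors are both the standard basis vector $e_k$. Because $\tau\mapsto De^{\tau A}$ is real-analytic, the characteristic polynomial $p(\tau,\lambda)=\det(\lambda I - De^{\tau A})$ is real-analytic in $(\tau,\lambda)$; simplicity of $D_{kk}$ at $\tau=0$ means $\partial p/\partial\lambda \neq 0$ there, so the implicit function theorem produces a real-analytic function $\lambda(\tau)$ on a neighborhood of $0$ with $\lambda(0)=D_{kk}$ and $p(\tau,\lambda(\tau))\equiv 0$. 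Since the other eigenvalues of $De^{\tau A}$ depend continuously on $\tau$ and have strictly smaller modulus at $\tau=0$, this strict dominance persists on a (possibly smaller) neighborhood of $0$, so $r(De^{\tau A})=\lambda(\tau)$ there, giving continuous differentiability.

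Second, I would compute $\lambda'(0)$ via the standard formula for the derivative of a simple eigenvalue: if $M(\tau)v(\tau)=\lambda(\tau)v(\tau)$ and $w^T M(\tau)=\lambda(\tau)w^T$ with $w^T v(0)\neq 0$, then $\lambda'(0)=w^T M'(0) v(0)/(w^T v(0))$. Differentiating $M(\tau)=De^{\tau A}$ at $\tau=0$ gives $M'(0)=DA$, and taking $v(0)=w=e_k$ (so that $w^T v(0)=1$) yields
$$
\lambda'(0)=e_k^{T} DA\, e_k = (DA)_{kk}=D_{kk}A_{kk},
$$
as claimed. The monotonicity statement then follows immediately: since $D_{kk}>0$, the sign of the derivative at $\tau=0$ equals the sign of $A_{kk}$, and continuous differentiability propagates strict monotonicity to a one-sided neighborhood of $0$.

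The only real obstacle I anticipate is ensuring that $r(De^{\tau A})$ genuinely coincides with the smooth branch $\lambda(\tau)$ near $\tau=0$, rather than being the pointwise maximum of several competing analytic branches. This is exactly where the assumption that $k$ is unique is essential: were there ties among the largest $D_{ii}$, the spectral radius at $\tau=0$ would be attained by several eigenvalues simultaneously, the associated analytic branches could have distinct slopes, and their maximum could exhibit a corner at $\tau=0$, destroying differentiability. Under the stated uniqueness, the argument above goes through cleanly.
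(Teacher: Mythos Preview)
Your proposal is correct and follows essentially the same route as the paper: both identify $D_{kk}$ as a simple, strictly dominant eigenvalue of $D$ with left and right eigenvector $e_k$, invoke the Implicit Function Theorem to obtain a smooth eigenvalue branch that coincides with the spectral radius near $\tau=0$, and then apply the standard first-order perturbation formula $\lambda'(0)=e_k^{T}DA\,e_k=D_{kk}A_{kk}$. Your discussion of why uniqueness of $k$ is needed to avoid corners in the spectral radius is a welcome clarification that the paper leaves implicit.
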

\begin{proof}
Note that the map $\tau \to B(\tau):=D\exponent^{\tau A}$ is continuously differentiable for all $\tau$ in $\reals$. Furthermore, 
$B(0)=D$ has a simple eigenvalue $D_{kk}=r(B(0))$ which is strictly larger than all other eigenvalues of $D$. This dominant eigenvalue
has corresponding left and right eigenvectors $u(0)=e_k$ and $v(0)=e_k$ (where $e_k$ is the $k$th standard basis vector in $\reals^N$) respectively, i.e. $B(0)u(0)=r(B(0))u(0)$ and $B^T(0)v(0)=r(B(0))v(0)$. It follows from the Implicit Function Theorem that there exists some $\epsilon>0$ such that for all $\tau$ with $|\tau| <\epsilon$, $r(B(\tau))$ is continuously differentiable, and that there exist differentiable vectors $u(\tau)$ and $v(\tau)$ such that
$$
B(\tau)u(\tau)=r(B(\tau))u(\tau)\textrm{ and } B^T(\tau)v(\tau)=r(B(\tau))v(\tau),
$$
which are normalized as follows
$$
\langle v(\tau),u(\tau) \rangle =1.
$$
Note that $r(B(\tau))=\langle v(\tau),B(\tau)u(\tau) \rangle$ for all 
$|\tau|<\epsilon$, and differentiating this identity yields 
\begin{eqnarray*}
\frac{d}{d\tau} \left( r(B(\tau)) \right) &=& r(\tau) \left(\langle \frac{d}{d \tau}\left(v(\tau)\right),u(\tau)\rangle + \langle v(\tau),\frac{d}{d\tau}\left(u(\tau) \right) \rangle \right)+\\
&&\langle v(\tau),\frac{d}{d \tau}\left(B(\tau)\right)u(\tau) \rangle \\
&=&\langle v(\tau),DA\exponent^{\tau A}u(\tau) \rangle,
\end{eqnarray*}
where the normalization identity was used to show that the first term above is zero. Evaluating at $\tau=0$ yields
$$
\frac{d}{d\tau} \left( r(B(\tau)) \right)|_{\tau =0} = \langle e_k,DAe_k \rangle =D_{kk}A_{kk},
$$
from which the conclusion follows.
\end{proof}

\begin{lemma}\label{near-infinity}
Assume that
\begin{enumerate}
\item $D$ is diagonal with positive diagonal entries, and
\item $A$ is a diagonally symmetrizable and let $\lambda_1$ be its largest eigenvalue.
\end{enumerate}
Then $\lim_{\tau \to +\infty} r(D\exponent^{\tau A})=\begin{cases}+\infty,\textrm{ if } \lambda_1>0\\
0,\textrm{ if }\lambda_1<0
\end{cases}$.
\end{lemma}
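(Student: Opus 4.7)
The plan is to recycle the Rayleigh-quotient representation derived in the proof of Theorem \ref{main}. Recall that after the orthogonal diagonalization $\tilde A = Q\Lambda Q^T$ with $\Lambda = \diag(\lambda_1,\dots,\lambda_N)$ listing the real eigenvalues of $A$, one obtains
$$
r(D\exponent^{\tau A}) = \max_{y\in E}\sum_{i=1}^N \exponent^{\lambda_i \tau}\, y_i^2, \qquad E:=\{y\in\reals^N : \langle y, Q^T D^{-1} Q y\rangle = 1\}.
$$
Since $D$ has positive diagonal entries, the matrix $Q^T D^{-1} Q$ is symmetric positive definite, so $E$ is a compact ellipsoid. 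By permuting the columns of $Q$ (which merely re-indexes the $\lambda_i$), I may assume $\lambda_1 \geq \lambda_i$ for all $i$.

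For the case $\lambda_1 < 0$, every eigenvalue is strictly negative, so for any $\tau\geq 0$ and $y\in E$ the integrand is dominated: $\sum_i \exponent^{\lambda_i \tau} y_i^2 \leq \exponent^{\lambda_1 \tau}\|y\|^2 \leq \exponent^{\lambda_1 \tau}\, M$, where $M := \max_{y\in E}\|y\|^2$ is finite by compactness of $E$. Taking the max over $y$ and letting $\tau\to+\infty$ forces $r(D\exponent^{\tau A}) \to 0$.

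For the case $\lambda_1>0$, my strategy is to exhibit a single fixed point of $E$ with nonzero first coordinate and use it as a lower bound. Set $y^* := c\, e_1$ with $c := 1/\sqrt{(Q^T D^{-1} Q)_{11}}$; this is well-defined and positive because the $(1,1)$-entry of a positive-definite matrix is positive, and by construction $y^*\in E$. Then
$$
r(D\exponent^{\tau A}) \;\geq\; \sum_{i=1}^N \exponent^{\lambda_i \tau}(y^*_i)^2 \;=\; c^2\exponent^{\lambda_1 \tau},
$$
which tends to $+\infty$ as $\tau\to+\infty$. The only technical point — not really an obstacle — is the compactness of $E$ together with the nonvanishing of $(Q^T D^{-1} Q)_{11}$, both of which follow immediately from positivity of the diagonal entries of $D$.
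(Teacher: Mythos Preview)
Your proof is correct and follows essentially the same approach as the paper's: both recycle the Rayleigh-quotient representation from Theorem~\ref{main}, bound above by $\exponent^{\lambda_1\tau}\max_{y\in E}\|y\|^2$ when $\lambda_1<0$, and bound below by evaluating at a scalar multiple of $e_1$ in the constraint ellipsoid when $\lambda_1>0$. The only cosmetic difference is that you compute the normalizing constant $c$ explicitly as $1/\sqrt{(Q^TD^{-1}Q)_{11}}$, whereas the paper simply asserts existence of such an $\alpha$ by positive-definiteness.
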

\begin{proof}
Recall from the proof of Theorem $\ref{main}$ that for all $\tau\geq 0$,
$$
r(D\exponent^{\tau A})=\max_{y: \langle y, Q^TD^{-1}Qy \rangle =1} \langle y,\exponent^{\tau \Lambda} y \rangle,
$$
where ${\tilde A}=T^{-1}AT$ for some real diagonal and invertible matrix $T$, and $Q$ is 
an orthogonal matrix such that $Q^T{\tilde A}Q=\Lambda$, for some 
diagonal matrix $\Lambda$ whose diagonal entries $\lambda_i$ are the eigenvalues of ${\tilde A}$ (and of $A$). Note that the ordering of the diagonal entries of $\Lambda$ is such that the entry in the top left corner of $\Lambda$ is $\lambda_1$, which is the largest eigenvalue of $A$.
\begin{itemize}
\item Assume that $\lambda_1>0$, and thus $A$ is unstable.
Let $\alpha$ be such that for $y_{\alpha}:=\alpha e_1$ holds that 
$\langle y_\alpha, Q^TD^{-1}Qy_\alpha \rangle =1$. Such a non-zero 
$\alpha$ exists because $Q^TD^{-1}Q$ is positive definite. 
Then it follows that for all $\tau\geq 0$,
$$
r(D\exponent^{\tau A})\geq \langle y_\alpha ,\exponent^{\tau \Lambda} y_\alpha \rangle =\alpha^2 \exponent^{\lambda_1 \tau},
$$
and since $\lambda_1>0$, the conclusion follows from taking the limit in the above inequality as $\tau \to +\infty$.
\item Assume that $\lambda_1<0$ and thus $A$ is stable. Then 
$\exponent^{\lambda_j \tau}\leq \exponent^{\lambda_1 \tau}$ for all $\tau\geq 0$ and for all $j$, because $\lambda_1$ is the largest eigenvalue of $A$. Then for all $\tau\geq 0$,
$$
r(D\exponent^{\tau A})=\max_{y: \langle y, Q^TD^{-1}Qy \rangle =1} \langle y,\exponent^{\tau \Lambda} y \rangle
\leq \exponent^{\lambda_1 \tau} \max_{y: \langle y, Q^TD^{-1}Qy \rangle =1}\langle y, y\rangle,
$$
and taking the limit as $\tau \to +\infty$ yields the desired result.
\end{itemize}
\end{proof}
The following is our main result concerning the effectiveness of periodic impulsive control of an unstable linear system 
${\dot x}=Ax$. It provides a sufficient condition on $A$ guaranteeing  
that this control methodology can stabilize the system, as long as the period between successive impulsive control interventions remains below a unique threshold. It also shows that there is a unique period $\tau$ for which $r(D\exponent^{\tau A})$ is minimized. This is important for  applications where the control objective is to stabilize the system as efficiently as possible.
\begin{stel}\label{main-stability}
Assume that
\begin{enumerate}
\item $D$ is diagonal with $D_{ii}\in (0,1]$ for all $i=1,\dots ,N$; furthermore, there is a unique $k$ such that $D_{kk}> D_{ii}$ for all $i\neq k$.
\item $A$ is a non-singular, diagonally symmetrizable and unstable matrix (i.e., the largest eigenvalue $\lambda_1$ of $A$ is positive), and $A_{kk}<0$.
\end{enumerate}
Then the function $\tau \to r(D\exponent^{\tau A})$, where $\tau \in [0,\infty)$, has the following properties:
\begin{enumerate}
\item There is a unique $\tau_s>0$ such that
$$
r(D\exponent^{\tau A})\begin{cases}
<1,\textrm{ if } \tau \in (0,\tau_s)\\
=1,\textrm{ if } \tau=\tau_s \\
>1,\textrm{ if } \tau > \tau_s
\end{cases}
$$
\item There is a unique $\tau_m\in (0,\tau_s)$ such that
$$
\inf_{\tau \geq 0} r(D\exponent^{\tau A})=r(D\exponent^{\tau_m A}).
$$
\end{enumerate}
\end{stel}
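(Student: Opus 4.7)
The plan is to combine the strict convexity of $f(\tau):=r(D\exponent^{\tau A})$ from Theorem \ref{main} with the boundary behavior supplied by Lemmas \ref{near-zero} and \ref{near-infinity}. Since $A$ is non-singular, Theorem \ref{main} gives that $f$ is strongly convex on every compact interval $[0,p]$, which in particular makes $f$ strictly convex on all of $[0,+\infty)$. At the left endpoint, $f(0)=r(D)=D_{kk}\in(0,1]$, and Lemma \ref{near-zero} (applicable because $D_{kk}$ is the unique largest diagonal entry of $D$) yields $f'(0)=D_{kk}A_{kk}<0$ under the assumption $A_{kk}<0$. At the right end, Lemma \ref{near-infinity} gives $f(\tau)\to+\infty$ because $\lambda_1>0$. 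With these three facts in hand, both parts follow from a routine ``strictly convex, $U$-shaped'' analysis.

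For part 1, existence of some $\tau_s>0$ with $f(\tau_s)=1$ is immediate from the intermediate value theorem: the condition $f'(0)<0$ forces $f(\tau)<f(0)\le 1$ for all sufficiently small $\tau>0$, while coercivity eventually pushes $f$ above $1$. Uniqueness of $\tau_s$, together with the sign pattern ($f<1$ on $(0,\tau_s)$ and $f>1$ on $(\tau_s,+\infty)$), follows from strict convexity: if $f$ took the value $1$ at two positive points $\tau_1<\tau_2$, then writing $\tau_1=\lambda\cdot 0+(1-\lambda)\tau_2$ with $\lambda=(\tau_2-\tau_1)/\tau_2\in(0,1)$ would give
$$
f(\tau_1)<\lambda f(0)+(1-\lambda)f(\tau_2)\le 1,
$$
contradicting $f(\tau_1)=1$. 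The analogous convex-combination argument (together with coercivity) rules out any root of $f-1$ beyond $\tau_s$, which then yields $f>1$ on $(\tau_s,+\infty)$.

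For part 2, existence of a minimizer $\tau_m\in[0,+\infty)$ follows from continuity and coercivity, and uniqueness from strict convexity. To pin down its location in $(0,\tau_s)$, two observations suffice: $f'(0)<0$ rules out $\tau_m=0$, and on $[\tau_s,+\infty)$ we have $f\ge 1$, whereas part 1 already exhibits values of $f$ strictly below $1$ on $(0,\tau_s)$, so no minimizer can lie in $[\tau_s,+\infty)$.

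I do not anticipate a real obstacle; the work is done by Theorem \ref{main} and the two boundary lemmas, and the argument merely stitches them together. The only care required is the borderline case $D_{kk}=1$, where $f(0)$ already equals the target value $1$; but the negative derivative $f'(0)<0$ still pushes $f$ strictly below $1$ immediately, so the strict-convexity uniqueness argument for $\tau_s$ goes through unchanged.
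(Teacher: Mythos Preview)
Your proposal is correct and follows essentially the same approach as the paper: both combine Theorem \ref{main} (strong, hence strict, convexity on compact intervals), Lemma \ref{near-zero} (initial decrease at $\tau=0$), and Lemma \ref{near-infinity} (divergence to $+\infty$), then use the Intermediate Value Theorem for existence of $\tau_s$ and strict convexity for uniqueness of $\tau_s$ and of the minimizer $\tau_m$. The only differences are cosmetic---e.g., the paper invokes Theorem \ref{unique-min} for uniqueness of the minimizer where you argue it directly, and the paper picks $\tau_s$ as the last crossing and then uses convexity on $[0,\tau_s]$ to get $r<1$ on the interior, whereas you first prove uniqueness of the crossing via a convex-combination contradiction.
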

\begin{proof}
The map $\tau \to r(\tau):=r(D\exponent^{\tau A})$ is continuous for $\tau \geq 0$, and Lemma $\ref{near-zero}$ implies that it is strictly decreasing for all sufficiently small $\tau$ near zero. Thus $r(\tau)<r(0)=D_{kk}\leq 1$ for all sufficiently small $\tau>0$. 
Then by Lemma $\ref{near-infinity}$ and the Intermediate Value Theorem, there exists some $\tau_s>0$ such that $r(\tau_s)=1$ and $r(\tau)>1$ for all $\tau > \tau_s$. Furthermore, Theorem $\ref{main}$ implies that $r(\tau)$ is strongly convex with some parameter $m_{\tau_s}>0$ for $\tau\in [0,\tau_s]$. Hence $r(\tau)$ is strictly convex for $\tau\in [0,\tau_s]$, and this implies that 
$r(\tau)<1$ for all $\tau\in (0,\tau_s)$. This establishes item $(1)$.

Since $r(\tau)$ is continuous, it achieves its minimum on the compact set $[0,\tau_s]$, say for $\tau=\tau_m$, and clearly, $\tau_m \in (0,\tau_s)$ and $r(\tau_m)<1$. Theorem $\ref{unique-min}$ in the Appendix implies that $r(\tau)$ has a unique minimum at $\tau=\tau_m$ on $[0,\tau_s]$. But $r(\tau)>1$ for all $\tau>\tau_s$, and thus $r(\tau)$ also has a unique minimum at $\tau=\tau_m$ on $[0,\infty)$. This establishes item $(2)$, and concludes the proof.
\end{proof}

An example of the map $\tau \to r(\tau):=r(D\exponent^{\tau A})$ for given matrices $A$ and $D$ which illustrates the previous theorem is shown in the bottom left panel of Figure \ref{fig-A}. Next, we consider the other cases.

{\bf Remark} If all conditions of Theorem $\ref{main-stability}$ hold, except that $A_{kk}>0$ instead of assuming that $A_{kk}<0$, then the function $\tau \to r(\tau):=r(D\exponent^{\tau A})$ behaves differently. We claim that in this case, $r(\tau)$ is strictly increasing for $\tau\geq 0$, and 
$\lim_{\tau \to +\infty}r(\tau)=\infty$. The latter limit is immediate from Lemma $\ref{near-infinity}$. To see why $r(\tau)$ is strictly increasing, we argue by contradiction. If it were not, there would be some $0\leq \tau_1 <\tau_2$ such that $r(\tau_2)\leq r(\tau_1)$. Since by 
Lemma $\ref{near-zero}$, $r(\tau)$ is strictly increasing near zero, we may assume that $\tau_1>0$. But then $\tau_1$ belongs to the interval $(0,\tau_2)$ and since $r(\tau)$ is strictly convex on $[0,\tau_2]$ (by Theorem $\ref{main}$), there exists some $\lambda\in (0,1)$ such that 
$$
r(\tau_1)<\lambda r(0) + (1-\lambda) r(\tau _2).
$$
But since $r(\tau_2)\leq r(\tau_1)$, this implies that $r(\tau_1)< r(0)$. Now consider the interval $[0,\tau_1]$. Since $r(\tau)$ is strictly increasing near $\tau=0$, there exists some $\tau_3\in (0,\tau_1)$ such that $r(0)<r(\tau_3)$. 
Then strict convexity of $r(\tau)$ on $[0,\tau_1]$ (by Theorem $\ref{main}$) implies that
there is some $\mu\in (0,1)$ such that $r(\tau_3)<\mu r(0)+(1-\mu)r(\tau_1)<r(0)$ (because $r(\tau_1)<r(0)$), which contradicts that $r(0)<r(\tau_3)$.

Consequently, in this case, and assuming in addition that $r(0)=D_{kk}<1$, there will be a unique $\tau_s$ such that
$$
r(D\exponent^{\tau A})\begin{cases}
<1,\textrm{ if } \tau \in [0,\tau_s)\\
=1,\textrm{ if } \tau=\tau_s \\
>1,\textrm{ if } \tau > \tau_s
\end{cases},
$$
and $r(\tau)$ has its unique miminizer over $\tau \geq 0$ at $\tau_m=0$. An example illustrating this case is given in the top left of Figure \ref{fig-A}.

{\bf Remark} Theorem $\ref{main-stability}$ and the subsequent Remark consider the case where $A$ has a positive eigenvalue, and therefore is unstable. It is natural to ask what would happen if instead $A$ is assumed to be stable, although this scenario is perhaps 
less interesting from a control perspective. After all, why would one want to stabilize an already stable system with impulsive periodic control? We will show that in some sense, it is indeed best to not apply impulsive control in this case. We start with the following
\begin{lemma}\label{all-neg}
Assume that $A$ is a stable, diagonally symmetrizable matrix. Then $A_{ii}<0$ for all $i$.
\end{lemma}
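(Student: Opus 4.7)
The plan is to use diagonal symmetrizability to reduce the problem to a statement about a symmetric negative definite matrix whose diagonal entries agree with those of $A$.

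First I would invoke the definition of diagonal symmetrizability to obtain a real invertible diagonal matrix $T$ such that $\tilde{A} := T^{-1}AT$ is symmetric. The key elementary observation is that because $T$ is diagonal, conjugation by $T$ preserves diagonal entries: for each $i$,
$$
\tilde{A}_{ii} = (T^{-1}AT)_{ii} = T_{ii}^{-1} A_{ii} T_{ii} = A_{ii}.
$$
Thus it suffices to show $\tilde{A}_{ii} < 0$ for all $i$.

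Next I would use the stability hypothesis. Since $A$ and $\tilde{A}$ are similar, they have the same spectrum. As $\tilde{A}$ is real symmetric, all its eigenvalues are real, and by assumption they all have negative real part, so they are in fact strictly negative. A real symmetric matrix with all eigenvalues negative is negative definite, so $x^T \tilde{A} x < 0$ for every nonzero $x \in \reals^N$.

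Finally, specializing to $x = e_i$, the $i$-th standard basis vector, gives
$$
A_{ii} = \tilde{A}_{ii} = e_i^T \tilde{A} e_i < 0,
$$
which completes the argument. There is no real obstacle here; the entire content of the lemma rests on the (easy but crucial) fact that similarity by a diagonal matrix leaves the diagonal of $A$ unchanged, after which the conclusion is the standard statement that a negative definite symmetric matrix has negative diagonal entries.
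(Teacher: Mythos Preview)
Your proof is correct and follows essentially the same route as the paper: conjugate by the diagonal $T$ to obtain a symmetric $\tilde A$ with the same (negative) spectrum and the same diagonal entries as $A$, then read off $\tilde A_{ii}<0$ from negative definiteness via $e_i^T\tilde A e_i$. The only cosmetic difference is that the paper phrases the last step through the Rayleigh quotient, bounding $\tilde A_{ii}\leq \lambda_1<0$, whereas you invoke negative definiteness directly.
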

\begin{proof}
As $A$ is diagonally symmetrizable, there exists a real, diagonal and invertible matrix $T$ such that
\begin{equation} \label{simil}
{\tilde A}=T^{-1}AT
\end{equation}
is a symmetric matrix. The eigenvalues of ${\tilde A}$ and $A$ coincide, and since $A$ is stable, this implies that the largest eigenvalue of ${\tilde A}$ is negative. Thus, ${\tilde A}$ is a negative semi-definite matrix. And since ${\tilde A}$ is symmetric, this implies that ${\tilde A}_{ii}<0$ for all $i$. Indeed, the largest eigenvalue $\lambda_1$ of ${\tilde A}$ is negative, and then Raleigh quotient
$$
\lambda_1=\max_{x:\langle x,x \rangle = 1} \langle x,{\tilde A}x \rangle,
$$
implies that for all $i$,
$$
\langle e_i, {\tilde A}e_i \rangle = {\tilde A}_{ii}\leq \lambda_1 <0,
$$
where $e_i$ denotes the $i$th standard basis vector of $\reals^N$. But note that since $T$ in $(\ref{simil})$ is a diagonal matrix, it follows that $A_{ii}={\tilde A}_{ii}$ for all $i$. Consequently, $A_{ii}<0$ for all $i$, which concludes the proof.
\end{proof}
The above Lemma is illustrated in the top right of Figure \ref{fig-A}. Equipped with this Lemma, we can now describe what happens when stabilizing a stable system with periodic impulsive control.
\begin{stel}\label{main-stability2}
Assume that
\begin{enumerate}
\item $D$ is diagonal with $D_{ii}\in (0,1]$ for all $i=1,\dots ,N$; furthermore, there is a unique $k$ such that $D_{kk}> D_{ii}$ for all $i\neq k$.
\item $A$ is a diagonally symmetrizable and stable matrix (i.e., the largest eigenvalue $\lambda_1$ of $A$ is negative).
\end{enumerate}
Then the function $\tau \to r(D\exponent^{\tau A})$ 
is strictly decreasing for $\tau \in [0,\infty)$, and $\lim_{\tau \to +\infty} r(D\exponent^{\tau A})=0$.
\end{stel}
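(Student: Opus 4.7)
The plan is to leverage two tools already established earlier in the paper: the Rayleigh-quotient representation of $\rdeta$ derived inside the proof of Theorem~\ref{main}, and the boundary behavior captured by Lemma~\ref{near-infinity}. The limit assertion $\lim_{\tau\to+\infty}\rdeta = 0$ is immediate from Lemma~\ref{near-infinity} applied with $\lambda_1 < 0$, so essentially all the work lies in verifying strict monotonicity on $[0,\infty)$.

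For the strict-decreasing part, I would begin by recalling from the proof of Theorem~\ref{main} the representation
$$
\rdeta = \max_{y:\,\langle y, Q^T D^{-1} Q y\rangle = 1}\;\sum_{i=1}^N \exponent^{\lambda_i \tau} y_i^2,
$$
where $\lambda_1,\dots,\lambda_N$ are the (real) eigenvalues of $A$. Since $A$ is stable, every $\lambda_i$ is bounded above by $\lambda_1 < 0$, so all $\lambda_i$ are strictly negative. For any fixed $y$ in the constraint set, consider $f_y(\tau):=\sum_i \exponent^{\lambda_i \tau} y_i^2$. Its derivative $f_y'(\tau)=\sum_i \lambda_i \exponent^{\lambda_i \tau}y_i^2$ is strictly negative, because each $\lambda_i<0$ and at least one $y_i\neq 0$ (the ellipsoidal constraint set $\{y:\langle y,Q^T D^{-1}Q y\rangle=1\}$ excludes the origin, as $Q^T D^{-1} Q$ is positive definite). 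Hence each $f_y$ is strictly decreasing on $[0,\infty)$.

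The last step is the elementary observation that a pointwise maximum of strictly decreasing functions is itself strictly decreasing. Given $0 \leq \tau_1 < \tau_2$, pick any $y^\ast$ attaining the maximum in the defining formula for $r(\tau_2)$; then
$$
r(\tau_2) = f_{y^\ast}(\tau_2) < f_{y^\ast}(\tau_1) \leq r(\tau_1),
$$
which gives strict monotonicity. I do not anticipate any real obstacle: the hypotheses singling out a unique dominant diagonal entry $D_{kk}$ and the values $D_{ii}\in(0,1]$ are included to parallel the statement of Theorem~\ref{main-stability}, but are not actually required here. Only the positivity of the diagonal of $D$ and the negativity of the spectrum of $A$ are genuinely needed. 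The argument therefore reduces, without further technicalities, to unpacking the variational formula already produced in Theorem~\ref{main} plus a direct appeal to Lemma~\ref{near-infinity}.
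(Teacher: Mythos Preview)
Your argument is correct, but it follows a genuinely different route from the paper's. The paper argues by contradiction: it first invokes Lemma~\ref{all-neg} to get $A_{kk}<0$, then Lemma~\ref{near-zero} to obtain strict decrease near $\tau=0$, and finally uses the strong convexity conclusion of Theorem~\ref{main} together with Lemma~\ref{near-infinity} to rule out any pair $0<\tau_1<\tau_2$ with $r(\tau_1)\le r(\tau_2)$. Your approach instead works directly with the variational formula from the proof of Theorem~\ref{main}, noting that every $f_y$ is strictly decreasing when all $\lambda_i<0$, and then uses the elementary fact that a pointwise maximum (attained, since the ellipsoid is compact) of strictly decreasing functions is strictly decreasing. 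Your route is shorter and, as you observe, does not actually need the unique-$k$ hypothesis or $D_{ii}\in(0,1]$; the paper's route requires the unique-$k$ assumption in order to apply Lemma~\ref{near-zero}. What the paper's approach buys is thematic: it illustrates how the strong convexity of Theorem~\ref{main} drives the qualitative conclusions of Section~\ref{section_implications}, keeping the narrative unified.
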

\begin{proof}
For $\tau \geq 0$, we set $r(\tau):=r(D\exponent^{\tau A})$. By the variational description of $r(\tau)$, see for instance the second line of the proof of Lemma $\ref{near-infinity}$, we see that $r(\tau)>0$ for all $\tau\geq 0$. Since $A_{ii}<0$ for all $i$ by Lemma $\ref{all-neg}$, 
Lemma $\ref{near-zero}$ implies that $r(\tau)$ is strictly decreasing for all sufficiently small $\tau$, and Lemma $\ref{near-infinity}$ implies that $\lim_{\tau \to \infty} r(\tau)=0$. It remains to be shown that $r(\tau)$ is strictly decreasing for all $\tau \geq 0$. If this were not the case, then there would exist $0<\tau_1< \tau_2$ such that $0<r(\tau_1)\leq r(\tau_2)$. Since $\lim_{\tau \to \infty} r(\tau)=0$, there exists $\tau_3 > \tau_2$ such that $r(\tau_3)<r(\tau_1)$. Note that $\tau_2 \in (\tau_1, \tau_3)$, and then the strong convexity of $r(\tau)$ on $[\tau_1,\tau_3]$ (by Theorem $\ref{main}$) implies that there exists some $\lambda\in (0,1)$ such that
$$
r(\tau_2)<\lambda r(\tau_1)+ (1-\lambda)r(\tau_3)<r(\tau_1),
$$
which contradicts that $r(\tau_1)\leq r(\tau_2)$.
\end{proof}
Since $\tau \to r(D\exponent^{\tau A})$ is strictly decreasing on $[0,\infty)$ when the assumptions in Theorem $\ref{main-stability2}$ hold, this function has no minimizer. The limiting behavior of this function suggest that the minimizer ``occurs as $\tau$ approaches infinity", a scenario which corresponds to not applying any impulsive periodic control at all. An example is given in the bottom right of Figure \ref{fig-A}.

\begin{figure}
\begin{center}
    \includegraphics[trim={1cm 3cm 1cm 2cm},clip, scale=0.55]{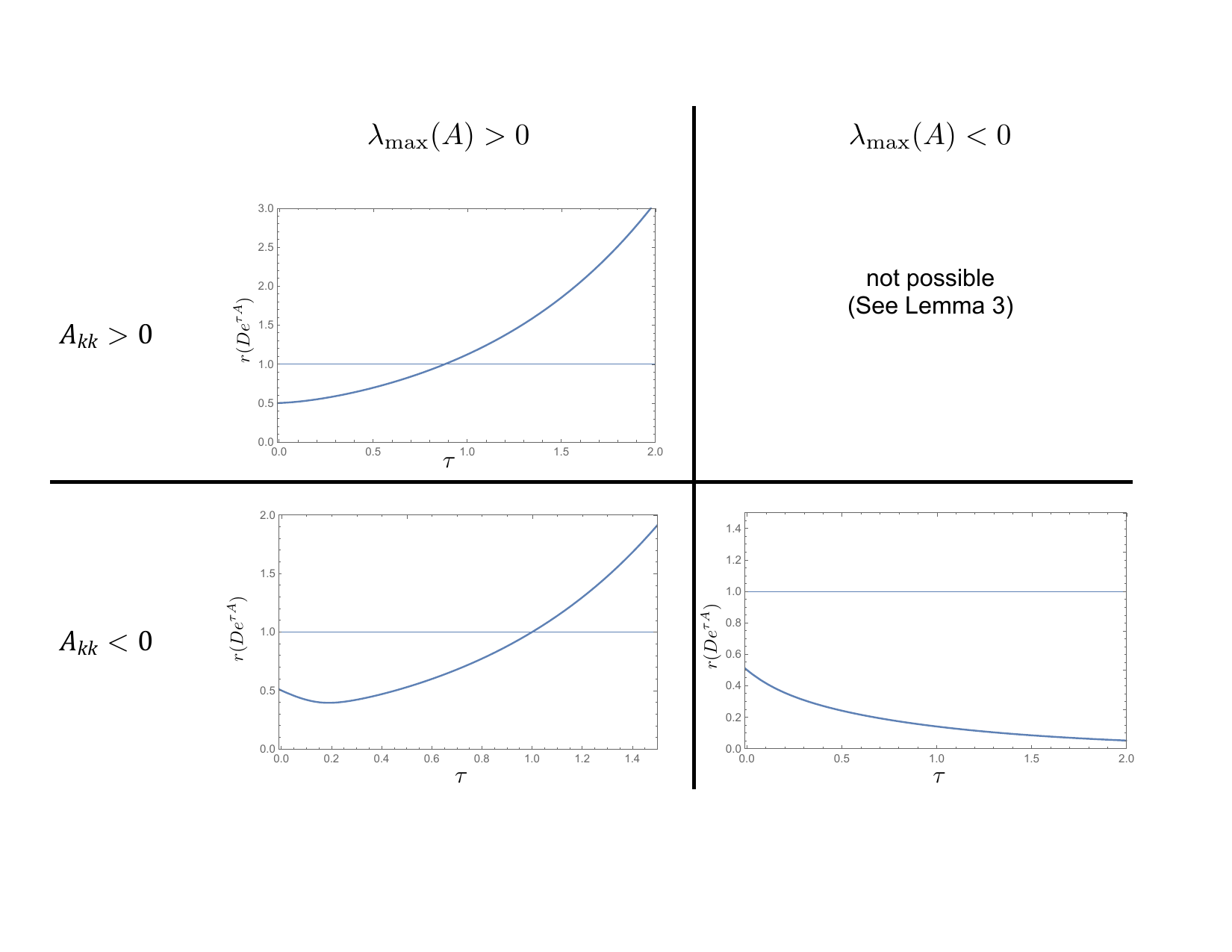}
\end{center}
\caption{Spectral radius as a function of $\tau$ in example matrices $A$ and $D$. All examples have a common matrix $D=\begin{bmatrix}
    0.5 & 0\\ 0 & 0.25
\end{bmatrix}$ but a different $A$. 
In the top left, bottom left, and bottom right, the matrices are $A=\begin{bmatrix}
    0.2 & 1\\ 1 & -0.2\end{bmatrix},
A=\begin{bmatrix}
   -2 & 1\\ 1 & 1\end{bmatrix},
A=\begin{bmatrix}
   -2 & 1\\ 1 & -2
\end{bmatrix}$, respectively. Here, $k=1$ since $D_{11}>D_{22}$, i.e., control for class 1 is weaker than for class 2.}\label{fig-A}
\end{figure}

To summarize, if we assume that the largest diagonal entry of $D$, $D_{kk}$, is less than $1$ (so control involves a multiplicative reduction in all variables), we can categorize the behavior into 3 distinct cases. 
First, if $\lambda_{\max}(A)<0$, the optimal strategy is to never control (bottom right in Figure \ref{fig-A}).  If $\lambda_{\max}(A)>0$ and the variable associated with weakest control (or largest diagonal entry $D_{kk}$ of $D$) is self-promoting (i.e., $A_{kk}>0$), then the optimal strategy is to treat as often as possible (top left in Figure \ref{fig-A}). Finally, if $\lambda_{\max}(A)>0$ and this variable is self-limiting (i.e., $A_{kk}<0$), then the optimal strategy is to treat at a precise frequency (bottom left in Figure \ref{fig-A}).



\section{Examples}\label{section_examples}
In this section, we provide an example of a real-world problem in biological control. which was previously discussed in the introduction as a motivating example.  For this, we apply our results to a given concrete model and discuss the implications to the ongoing control efforts. 

The model involves the biological control of a structured population, in which distinct population classes are impacted by the control intervention differently.  Before discussing the concrete example, we begin with a brief description of the results for an unstructured (1D) population model, for the purpose of juxtaposition.
\subsection{Unstructured populations}

In an unstructured population, we have the scalar linear impulse differential equation

\begin{align}
    \frac{dx}{dt} &= ax\\
x(n\tau^+) &= d x(n\tau^-)
\end{align}

The spectral radius is $d\exponent^{a\tau}$.  If $a>0$, then the map $\tau \rightarrow d \exponent^{a\tau}$ is strictly increasing, and hence, minimized when $\tau=0$. Conversely, if $a<0$, then the map is strictly decreasing. 

Interpreting this, if the population is exponentially growing in the absence of interventions, we should intervene as often as possible. Conversely, if the population is exponentially declining in the absence of interventions, we should not intervene. As we will see below, in case the populations are structured into classes and classes are impacted differently by the control intervention, there is no such simple dichotomy. 

\subsection{Spatially-structured: soil-transmitted helminths}
Soil-transmitted helminths are harmful parasitic worms that infect the gut of humans and several livestock species, causing various ailments such as malnutrition and developmental issues \cite{WHO}. They have been subjected to control efforts, including through preventative chemotherapy interventions, also known as massive drug administration (MDA).  This intervention involves the widespread distribution of oral anthelmintic drugs a few times a year to populations where the disease is thought to be prevalent. 

Here we focus on three human-infecting species: \emph{Ascaris lumbricoides} (herafter roundworm), \emph{Trichuris trichuria} (herafter whipworm), and \emph{Ancylostoma duodenale} (herafter hookworm). and compare the frequency of interventions needed for effective control.

\subsubsection{Model}
The life cycle of STH consists of two distinct spatial locations:
\begin{enumerate}
    \item Adult worms live and produce eggs within the host environment.
    \item Eggs get secreted into an external environment, where they develop into larvae.
\end{enumerate}

Following \cite{AndersonMaybook}, we model this through a set of linear differential equations.  Let $x_1(t)$ and $x_2(t)$ be the adult worms in the host environment and the larvae in the external environment, respectively.  Then, our uncontrolled model is


\begin{equation}
    \frac{d x}{dt} = \begin{bmatrix}
    -\mu_1 & \beta N \\
\lambda & -(\mu_2+ \beta N)
    \end{bmatrix}x
\end{equation}
    where $\beta$ is the per-host rate at which larvae are taken up (depends on their contact with the external environment), 
    $N$ is the population of hosts (``size of host environment"- assumed to be fixed), 
    $\lambda$ is the per-adult worm egg production rate,
    and $\mu_1$ and
    $\mu_2$ are the natural per-capita death rates in the host and external environment, respectively. 
These life cycle parameters differ between species.

We assume the only control intervention is the administration of anthelmintic drugs, and that this is given to a proportion $c$ of hosts in a given population periodically at intervals of length $\tau$.  The drug targets to kill adult worms within the host environment and has a different efficacy $\delta$; larvae in the external environment are not impacted.  Under these assumptions, our controlled model is of the form \ref{model}, with 
$$
D=\begin{bmatrix}
(1-c\delta) & 0\\
0 & 1
\end{bmatrix}
$$

\subsubsection{Parameterizations}
We use values as described in \cite{Truscott} and \cite{Coffeng} to parameterize the matrix $A$ for each species. Additionally, using drug efficacies from \cite{vercruysse2011}, (assuming egg count reduction is proportional to adult worm reduction) and assuming a $75\%$ coverage of school-aged children (target based on the WHO \cite{WHO_PC}) that compose of $50\%$ of the total population, we express the control intervention matrix $D$, for each species. 

For roundworms, we have

$$A_1 = \begin{bmatrix}
        -0.0028 & 1.3 \times 10^{-8} \\
    5000 & -0.016
\end{bmatrix}
\quad
D_1 = \begin{bmatrix}
    0.62875 & 0 \\
    0 & 1
\end{bmatrix}.
$$

For whipworms, we have

$$A_2 = \begin{bmatrix}
        -0.0028 & 2.089 \times 10^{-7} \\
    1000 & -0.05
\end{bmatrix}
\quad
D_2 = \begin{bmatrix}
    0.8125 & 0 \\
    0 & 1
\end{bmatrix}.
$$

For hookworms, we have
$$A_3 = \begin{bmatrix}
        -0.0014 & 1.18 \times 10^{-7} \\
    1500 & -0.082
\end{bmatrix}
\quad
D_3 = \begin{bmatrix}
    0.64375 & 0 \\
    0 & 1
\end{bmatrix}.
$$

The treatment drug is most effective against roundworms, which also has the lowest contact rate but the highest fecundity. The treatment drug is least effective against whipworms, but these also have the lowest fecundity.  Additionally, in all cases, the larvae class is the least controlled class, since it is not impacted by the intervention, i.e., $[D_i]_{22}>[D_i]_{11}$ for $i=1,2,3$. Hence, we set $k=2$.

\subsubsection{Implications}
Using each parameterization given above, we see that $\lambda_{\max}(A_i)>0$ for $i=1, 2, 3$, which predicts each parasite population will grow in the absence of any intervention.  Additionally, $A_i$ is diagonally symmetrizable, which follows from observing that $A_i$ is sign symmetric, that all $2\times 2$ matrices satisfy the cycle condition 
$(\ref{cycle})$ and, then invoking Proposition \ref{diag-symm-char}. Alternatively, one can use Corollary \ref{diag-symm-tri}. Finally, observe that $[A_i]_{kk}<0$ for $i=1,2,3$.  

Applying Theorem \ref{main-stability}, there is a unique period $\tau_m$ that minimizes the spectral radius map $\tau \rightarrow \rdeta$, for each species (Figure \ref{Fig_STH}).  The value $\tau_m$ is the optimal period to treat at.  For both whipworm and hookworm, the optimal period is very low, indicating that treatment is best administered every few days.  For roundworm, the optimal treatment period is around every 40 days.  

Additionally, applying Theorem \ref{main-stability}, there is a unique $\tau_s$ such that for all $\tau \in (0,\tau_s)$, we have $\rdeta <1$. The value $\tau_s$ is the upper bound to the amount of time in between drug administration needed in order to stabilize zero, i.e., effectively control the population.  We find that one needs to treat around once a year, more than twice a year or a little more than once every two years for roundworm, whipworm, and hookworm, respectively. The differences between the life history rates and the drug efficacies of these distinct species drive differences in the frequency of control interventions needed. Currently, the recommendations from the WHO for mitigating this disease in human populations are agnostic to which parasitic species are present \cite{WHO_PC}. 

\begin{figure}
\begin{center}
    \includegraphics[trim={0.1cm 5cm 0cm 5cm},clip, scale=0.6]{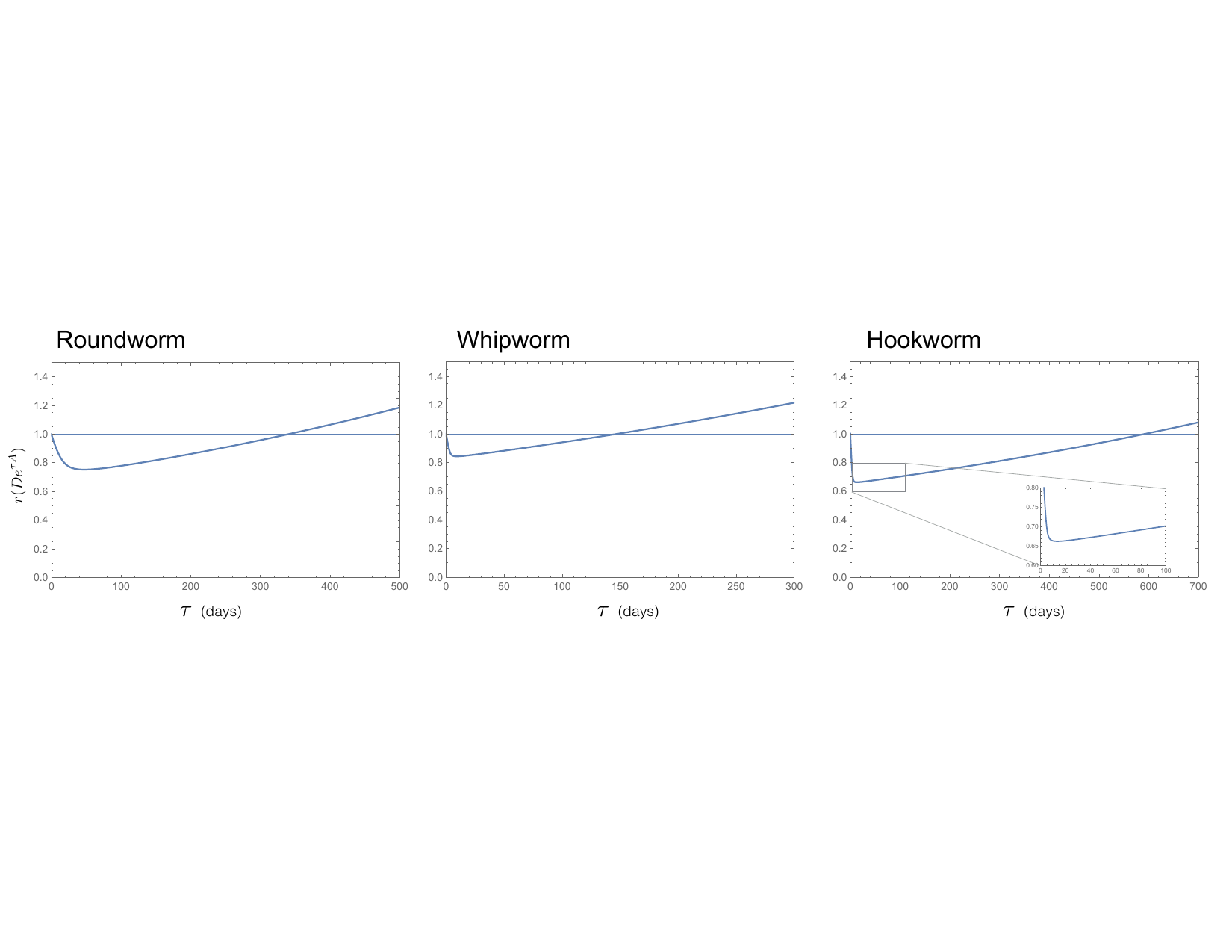}
\end{center}
\caption{Spectral radius of \deta as a function of $\tau$ (days) in the parameterized linear model for three species of soil-transmitted helminths. Parameter values are given in the main text.}\label{Fig_STH}
\end{figure}




\section{Discussion}\label{section_discussion}
Our main result establishes the convexity of the map $\tau \rightarrow \rdeta$.  This map arises from the consideration of the periodic impulse control of a system otherwise described by a set of linear differential equations. The main implications of this result are that:
\begin{itemize}
\item We find that there exists a unique range for the period between consecutive control interventions which guarantees that the system can be stabilized, and
\item We find a unique optimal period to achieve this.
\end{itemize}

Essentially, the matrix \deta comes from considering the discrete time map of the system at the moment of impulse, either right before or after the control intervention. Hence, it ignores the dynamics in between interventions, and rescales time to be relative to the period of the pulses. Because of this, one must be careful with the interpretation. In Theorem \ref{main-stability}, the optimal period (that minimizes the spectral radius $\rdeta$) is indicative of the most efficient period, i.e., how to get the ``closest" to zero per pulse.  It does not necessarily tell us about the system in real time and which control period will lead to zero the quickest. 

Our modeling framework assumes linearity of the system dynamics and, that the times between consecutive interventions are always equal. It is natural to aim to extend these results to consider nonlinear systems and more nuanced control strategies. By considering the linearization around an equilibrium point, the results may be applicable in some nonlinear systems.  Additionally, the approaches may be readily extended to other periodic control strategies.  For example, we could evaluate whether applying interventions multiple times in a short burst followed by a longer time in between bursts is more efficient than when they are all evenly spaced in time. Both considering nonlinear systems or these other strategies will require additional care and thought.

Finally, here we provide sufficient conditions, most crucially that $A$ is diagonally symmetrizable, for the convexity of the spectral radius of \deta. The following example demonstrates that this is not a necessary condition.  Consider an $N\times N$ triangular matrix with at least one nonzero off-diagonal entry.  This is clearly not diagonally symmetrizable, as it does not satisfy the sign-symmetric condition (see Proposition \ref{diag-symm-char}).  But the map $\tau \rightarrow \rdeta$ is convex.  To see this, observe that for a triangular matrix $A$ and a diagonal matrix $D$, the matrix $D\exponent^{\tau A}$ remains triangular and the eigenvalues are of the form $D_{ii}\exponent^{A_{ii}\tau}$. If $D$ is a non-negative matrix, then each eigenvalue is also non-negative, real, and convex with respect to $\tau$. Thus, the spectral radius  $r(D\exponent^{\tau})$ equals $\max_{i} \left( D_{ii}\exponent^{A_{ii}\tau}\right)$, which by Theorem \ref{pointwise-max} in the Appendix, 
is convex with respect to $\tau$. Future work will be to weaken or provide alternate conditions for this convexity.






\section*{Appendix}
In this Appendix we review some properties of convex functions. Most of the material presented here can be found in \cite{nesterov}, although for completeness, we include some proofs of results whose proof was omitted in \cite{nesterov}. We recall the definitions of three types of convexity. In what follows, $||x||$ denotes the Euclidean norm of the vector $x$ in $\reals^N$.
\begin{defi}
Let $C$ be a convex set in $\reals^N$. We say that a function $f:C\to \reals$ is
\begin{itemize}
\item convex if
$$
f(\lambda x_1 + (1-\lambda)x_2) \leq \lambda f(x_1)+(1-\lambda)f(x_2),\textrm{ for all } x_1,x_2 \textrm{ in } C\textrm{ and }\lambda \in [0,1].
$$
\item strictly convex if 
$$
f(\lambda x_1 + (1-\lambda)x_2) < \lambda f(x_1)+(1-\lambda)f(x_2),\textrm{ for all } x_1\neq x_2\textrm{ in }C\textrm{ and }\lambda \in (0,1).
$$
\item strongly convex \footnote{It is a standard exercise to show that this definition of strong convexity of $f$ with parameter $m>0$ is equivalent to $g(x):=f(x)-\frac{m}{2}||x||^2$ being convex.} with parameter $m>0$ if
$$
f(\lambda x_1 + (1-\lambda)x_2) \leq \lambda f(x_1)+(1-\lambda)f(x_2) -\frac{m}{2}\lambda(1-\lambda)||x_1-x_2||^2,
$$
for all  $x_1$ and $x_2$ in $C$, and all $\lambda$ in $[0,1]$.
\end{itemize}
\end{defi}

For continuously differentiable functions, the next two results provide characterizations of convexity and strong convexity.
\begin{stel} (Theorem 2.1.2 in \cite{nesterov}) \label{tangent-plane}
Let $C$ be an open convex set in $\reals^N$, and assume that $f:C\to \reals$ is continuously differentiable (i.e., the gradient $\nabla f(x)$ is continuous for all $x$ in $C$). Then $f$ is convex  if and only if
\begin{equation}\label{plane}
f(x_2)\geq f(x_1)+\langle \nabla f(x_1),x_2-x_1 \rangle, \textrm{ for all }x_1,x_2 \textrm{ in }C.
\end{equation}
\end{stel}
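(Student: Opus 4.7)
The plan is to establish the two directions of the equivalence separately, using only the definition of convexity together with the standard definition of the gradient via the directional derivative.

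For the forward direction, I would assume $f$ is convex and fix arbitrary $x_1, x_2 \in C$. For any $\lambda \in (0,1]$, convexity applied to the decomposition $(1-\lambda)x_1 + \lambda x_2$ yields
\begin{equation*}
f(x_1 + \lambda(x_2 - x_1)) \leq (1-\lambda) f(x_1) + \lambda f(x_2).
\end{equation*}
Subtracting $f(x_1)$ from both sides and dividing by $\lambda>0$ gives
\begin{equation*}
\frac{f(x_1 + \lambda(x_2 - x_1)) - f(x_1)}{\lambda} \leq f(x_2) - f(x_1).
\end{equation*}
Since $C$ is open, the left-hand side is well-defined for all sufficiently small $\lambda>0$, and by continuous differentiability of $f$, its limit as $\lambda \to 0^+$ is the directional derivative $\langle \nabla f(x_1), x_2 - x_1 \rangle$. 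Passing to this limit yields $\langle \nabla f(x_1), x_2 - x_1 \rangle \leq f(x_2) - f(x_1)$, which is exactly (\ref{plane}).

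For the reverse direction, assume (\ref{plane}) holds. Fix $x_1, x_2 \in C$ and $\lambda \in [0,1]$, and let $z := \lambda x_1 + (1-\lambda) x_2$, which lies in $C$ by convexity of $C$. Applying (\ref{plane}) at the base point $z$ with the two target points $x_1$ and $x_2$ gives
\begin{equation*}
f(x_1) \geq f(z) + \langle \nabla f(z), x_1 - z \rangle, \qquad f(x_2) \geq f(z) + \langle \nabla f(z), x_2 - z \rangle.
\end{equation*}
Multiplying the first inequality by $\lambda$ and the second by $(1-\lambda)$ and adding, the gradient term becomes $\langle \nabla f(z), \lambda x_1 + (1-\lambda) x_2 - z \rangle = \langle \nabla f(z), 0 \rangle = 0$, so
\begin{equation*}
\lambda f(x_1) + (1-\lambda) f(x_2) \geq f(z) = f(\lambda x_1 + (1-\lambda) x_2),
\end{equation*}
which is the definition of convexity.

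Neither direction is particularly delicate; the only mildly subtle point is ensuring that in the forward direction the point $x_1 + \lambda(x_2 - x_1)$ stays in $C$ as $\lambda \to 0^+$, which is immediate from $x_1 \in C$ and the openness of $C$, and that the difference quotient converges to the directional derivative, which follows from the continuous differentiability hypothesis together with the mean value theorem along the line segment. No steps require results beyond elementary calculus and the definition of convexity.
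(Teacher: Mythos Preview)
Your proof is correct and is the standard argument for this classical characterization. Note, however, that the paper does not actually supply its own proof of this theorem: it simply cites Nesterov (Theorem~2.1.2) and moves on. That said, the paper's proof of the very next result (Theorem~\ref{characterizations}, the strong-convexity analogue) uses precisely the same two maneuvers you employ here---rearranging the convexity inequality into a difference quotient and passing to the limit for the forward direction, and applying the gradient inequality twice at the point $z=\lambda x_1+(1-\lambda)x_2$ and taking the convex combination for the converse---so your approach is entirely in the spirit of the paper's Appendix.
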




\begin{stel} (Theorem 2.1.9 in \cite{nesterov})   \label{characterizations}
Let $C$ be an open convex set in $\reals^N$, and assume that $f:C\to \reals$ is continuously differentiable.
Then the following statements are equivalent:
\begin{itemize}
\item $f$ is strongly convex with parameter $m>0$. 
\item \begin{equation}\label{second}
f(x_2)\geq f(x_1)+ \langle \nabla f(x_1), x_2-x_1 \rangle +\frac{m}{2}||x_2-x_1||^2,\textrm{ for all } x_1\textrm{ and } x_2 \textrm{ in }C.
\end{equation}
\item \begin{equation}\label{first}
\langle \nabla f(x_1) - \nabla f(x_2), x_1-x_2 \rangle \geq m ||x_1-x_2||^2,\textrm{ for all } x_1\textrm{ and } x_2 \textrm{ in }C.
\end{equation}
\end{itemize}
\end{stel}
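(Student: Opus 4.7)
The plan is to prove the three items equivalent by the cyclic chain $(i) \Rightarrow (ii) \Rightarrow (iii) \Rightarrow (i)$, where I abbreviate as $(i)$ the strong convexity property, $(ii)$ the quadratic tangent bound $(\ref{second})$, and $(iii)$ the strong monotonicity $(\ref{first})$. Each step reduces to a standard calculus or algebraic manipulation, so the real work lies in choosing the right parameterizations.

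For $(i) \Rightarrow (ii)$: fix $x_1, x_2 \in C$, parameterize the segment by $x_\lambda := (1-\lambda)x_1 + \lambda x_2$, apply the strong convexity inequality to $x_\lambda$, subtract $f(x_1)$, and divide through by $\lambda > 0$. Continuous differentiability implies that $(f(x_\lambda) - f(x_1))/\lambda$ converges to $\langle \nabla f(x_1), x_2 - x_1 \rangle$ as $\lambda \to 0^+$, while the right-hand side converges to $f(x_2) - f(x_1) - \frac{m}{2}||x_2 - x_1||^2$. Rearranging produces $(\ref{second})$.

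For $(ii) \Rightarrow (iii)$: apply $(\ref{second})$ once as written and once after swapping $x_1$ and $x_2$, then add the two inequalities. The $f$ values cancel, the two $\frac{m}{2}||x_2 - x_1||^2$ terms combine into $m\,||x_1 - x_2||^2$, and the inner products collapse into $\langle \nabla f(x_1) - \nabla f(x_2), x_1 - x_2 \rangle$ after a sign flip, yielding $(\ref{first})$ directly.

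For $(iii) \Rightarrow (i)$ I would proceed in two substeps. First, recover $(\ref{second})$ from $(\ref{first})$ by defining $\phi(t) := f(x_1 + t(x_2 - x_1))$ on $[0,1]$ and applying $(\ref{first})$ to the pair $(x_1 + t(x_2 - x_1), x_1)$; dividing through by $t$ gives $\phi'(t) - \phi'(0) \geq m\,t\,||x_2 - x_1||^2$, and integrating over $[0,1]$ yields $(\ref{second})$ exactly. Second, derive $(i)$ from $(\ref{second})$ by fixing $x_1, x_2, \lambda$, setting $x_\lambda = \lambda x_1 + (1-\lambda)x_2$, and applying $(\ref{second})$ twice, with base point $x_\lambda$ and respective targets $x_1$ and $x_2$. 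Weighting these by $\lambda$ and $1-\lambda$ and summing, the gradient terms cancel because $\lambda(x_1 - x_\lambda) + (1-\lambda)(x_2 - x_\lambda) = 0$, and the quadratic terms combine through the identity $\lambda(1-\lambda)^2 + (1-\lambda)\lambda^2 = \lambda(1-\lambda)$, producing exactly the strong convexity inequality.

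The main obstacle is the step $(iii) \Rightarrow (ii)$, which is the only one that is not purely algebraic: it requires passing to a one-dimensional restriction along the segment and invoking the fundamental theorem of calculus, together with a careful application of the strong monotonicity inequality to the one-parameter family of point pairs $\bigl(x_1 + t(x_2 - x_1),\, x_1\bigr)$. Everything else follows either from letting $\lambda \to 0^+$ in a difference quotient, from a symmetry/summation trick, or from a standard convex-combination identity.
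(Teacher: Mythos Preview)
Your proof is correct and matches the paper's approach. Although you package the argument as a cycle $(i)\Rightarrow(ii)\Rightarrow(iii)\Rightarrow(i)$, your last step actually passes through $(ii)$, so you establish the same four implications by the same four techniques (limiting difference quotient for $(i)\Rightarrow(ii)$, swap-and-add for $(ii)\Rightarrow(iii)$, one-dimensional integration along the segment for $(iii)\Rightarrow(ii)$, and the weighted sum at $x_\lambda$ for $(ii)\Rightarrow(i)$) as the paper, merely in a different order.
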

\begin{proof}
\begin{itemize}
\item Let's first show  the equivalence of strong convexity of $f$ with parameter $m>0$ and condition $(\ref{second})$.
If $f$ is strongly convex with parameter $m>0$, then for all $x_1$ and $x_2$ in $C$, and for all $\lambda$ in $[0,1)$ (note that we are excluding $\lambda=1$),
\begin{eqnarray*}
 f(x_2)&\geq&  \frac{f(\lambda x_1 + (1-\lambda)x_2)}{1-\lambda} - \frac{\lambda}{1-\lambda}f(x_1) + \frac{m}{2} \lambda ||x_1-x_2||^2 \\
 &=&f(x_1)+\frac{f(\lambda x_1 +(1-\lambda)x_2) - f(x_1)}{1-\lambda}    + \frac{m}{2} \lambda ||x_1-x_2||^2\\
 &=&f(x_1)+\frac{f(x_1 +(1-\lambda)(x_2-x_1)) - f(x_1)}{1-\lambda} + \frac{m}{2} \lambda ||x_1-x_2||^2
\end{eqnarray*}
Taking the limit as $\lambda\to 1-$ in the above inequality, the limit of the last quotient  is  equal to  $\langle \nabla f(x_1),x_2-x_1 \rangle$. 
This shows that $(\ref{second})$ holds.

For the converse, note that for all $x_1$ and $x_2$ in $C$ and all $\lambda$ in $[0,1]$,
\begin{eqnarray*}
f(x_2)&\geq& f(\lambda x_1 + (1-\lambda)x_2) + \langle \nabla f (\lambda x_1 + (1-\lambda)x_2), \lambda (x_2-x_1) \rangle \\
&+& \frac{m}{2} || \lambda (x_2-x_1)||^2\\
f(x_1)&\geq& f(\lambda x_1 + (1-\lambda)x_2) + \langle \nabla f (\lambda x_1 + (1-\lambda)x_2), (1-\lambda) (x_1-x_2) \rangle  \\
&+& \frac{m}{2} || (1-\lambda ) (x_2-x_1)||^2
\end{eqnarray*}
Multiplying the first inequality by $(1-\lambda)$, the second by $\lambda$, and adding the resulting inequalities shows that $f$ is strongly convex with parameter $m>0$.

\item Let's show next that $(\ref{second})$ and $(\ref{first})$ are equivalent. If $(\ref{second})$ holds then for all $x_1$ and $x_2$ in $C$,
\begin{eqnarray*}
f(x_2)&\geq& f(x_1)+ \langle \nabla f(x_1), x_2-x_1 \rangle +\frac{m}{2}||x_2-x_1||^2,\textrm{ and }\\
f(x_1)&\geq& f(x_2)+ \langle \nabla f(x_2), x_1-x_2 \rangle +\frac{m}{2}||x_2-x_1||^2,
\end{eqnarray*}
Adding both inequalities yields $(\ref{first})$. 

For the converse, assume that $(\ref{first})$ holds. Then for all $x_1$ and $x_2$ in $C$,
\begin{eqnarray*}
f(x_2)-f(x_1) &=& \int_0^1 \frac{d}{d\tau} \left( f(x_1+\tau(x_2-x_1))\right)d \tau \\
&=&\int_0^1 \langle \nabla f (x_1+\tau(x_2-x_1)), x_2-x_1 \rangle d \tau \\
&=&\langle \nabla f (x_1),x_2-x_1 \rangle + \int_0^1\frac{1}{\tau} \langle \nabla f (x_1+\tau(x_2-x_1)) - \nabla f(x_1), \tau(x_2-x_1) \rangle d \tau \\
&\geq& \langle \nabla f (x_1),x_2-x_1 \rangle + \int_0^1\frac{1}{\tau} m||\tau(x_2-x_1)||^2d \tau \\
&=&\langle \nabla f (x_1),x_2-x_1 \rangle + \int_0^1 \tau m ||\tau(x_2-x_1)||^2d \tau \\
&=&\langle \nabla f (x_1),x_2-x_1 \rangle + \frac{m}{2}||x_2-x_1||^2,
\end{eqnarray*}
and thus $(\ref{second})$ holds.
\end{itemize}
\end{proof}

Next we review characterizations of convexity and strong convexity for functions which are twice continuously differentiable. 
For such functions $f:C\to \reals$, we denote the Hessian of $f$ at any $x$ in $C$ by $H_f(x)$, i.e. $[H_f(x)]_{ij}=\partial^2 f/\partial x_i \partial x_j (x)$, for all $i,j$ in $\{1,\dots, N\}$ and all $x$ in $C$. For a symmetric $N\times N$ matrix $B$, the notation $B\succeq 0$ simply means that $B$ is a positive semi-definite matrix (or equivalently, that all the eigenvalues of $B$ are non-negative real numbers).
\begin{stel}  (Theorems 2.1.4 and 2.1.11 in \cite{nesterov})) \label{second-derivative}
Assume that $C$ is an open convex set in $\reals^N$, and that $f:C\to \reals$ is twice continuously differentiable. Then
\begin{itemize}
\item $f$ is convex if and only if $H_f(x)\succeq 0$ for all $x$ in $C$.

\item $f$ is strongly convex with parameter $m>0$ if and only if $H_f(x) - mI\succeq 0$, for all $x$ in $C$.
\end{itemize}
\end{stel}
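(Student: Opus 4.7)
The plan is to reduce both statements to the first-order characterizations already in hand---inequality $(\ref{plane})$ for convexity (Theorem $\ref{tangent-plane}$) and inequality $(\ref{second})$ for strong convexity (Theorem $\ref{characterizations}$)---and to bridge the first- and second-order worlds via Taylor's theorem with the integral form of the remainder. Concretely, for any $x_1, x_2 \in C$, convexity of $C$ puts the whole segment between them in $C$, and twice continuous differentiability yields the identity
\[
f(x_2) - f(x_1) - \langle \nabla f(x_1), x_2-x_1 \rangle = \int_0^1 (1-t)\, \langle x_2-x_1,\, H_f(x_1 + t(x_2-x_1))(x_2-x_1) \rangle\, dt.
\]
This identity is the central tool for the sufficiency direction of each equivalence.

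For sufficiency in the convex case, assume $H_f(x) \succeq 0$ on $C$. Then the integrand in the identity is non-negative along the segment, the integral is non-negative, and we recover the tangent-plane inequality $(\ref{plane})$; Theorem $\ref{tangent-plane}$ then gives convexity. For sufficiency in the strongly convex case, assume $H_f(x) - mI \succeq 0$ on $C$, which lower-bounds the integrand by $(1-t)\, m\, \|x_2-x_1\|^2$; the integral factor $\int_0^1 (1-t)\, dt = 1/2$ produces exactly the $\tfrac{m}{2}\|x_2-x_1\|^2$ term of $(\ref{second})$, and Theorem $\ref{characterizations}$ then yields strong convexity with parameter $m$.

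For necessity I would fix an arbitrary $x \in C$ and direction $v \in \reals^N$, and, using openness of $C$, consider the one-variable restriction $g(t) := f(x+tv)$ on a small open interval around $0$, observing that $g''(0) = \langle v, H_f(x) v \rangle$. If $f$ is convex, then so is $g$, and a convex twice continuously differentiable function of one variable has non-negative second derivative at every point (e.g.\ by applying $(\ref{plane})$ to $g$ and Taylor-expanding both sides to second order), giving $\langle v, H_f(x) v \rangle \geq 0$. If $f$ is strongly convex with parameter $m$, then $(\ref{second})$ with $x_1 = x$ and $x_2 = x + tv$ gives $g(t) - g(0) - g'(0)\,t \geq \tfrac{m}{2}\, t^2 \|v\|^2$; dividing by $t^2$ and letting $t \to 0$ extracts $\tfrac{1}{2}\langle v, H_f(x) v \rangle \geq \tfrac{m}{2}\|v\|^2$, i.e.\ $H_f(x) - mI \succeq 0$. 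As $x$ and $v$ are arbitrary, both Hessian conditions hold throughout $C$. The only real subtlety I anticipate is bookkeeping the quantifier order so that pointwise Hessian information does translate into a global first-order inequality valid for every pair $(x_1,x_2)$; the Taylor identity above---with an exact integral remainder rather than a local little-$o$ error---is precisely what makes this step routine.
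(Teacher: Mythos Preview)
Your proof is correct and follows essentially the same approach as the paper: both directions hinge on an integral Taylor remainder for $f(x_2)-f(x_1)-\langle\nabla f(x_1),x_2-x_1\rangle$ together with the first-order characterizations of Theorems~\ref{tangent-plane} and~\ref{characterizations}. The only differences are cosmetic---the paper writes the remainder as a double integral $\int_0^1\!\int_0^\tau\langle H_f(\cdot)(y-x),y-x\rangle\,d\lambda\,d\tau$ rather than your equivalent $(1-t)$-weighted single integral, derives necessity from condition~$(\ref{first})$ instead of~$(\ref{second})$, and only spells out the strongly convex bullet.
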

\begin{proof}
We only provide the proof of the last statement. 

Assuming that $f$ is twice continuously differentiable and strongly convex with parameter $m>0$, it follows from $(\ref{first})$ in Theorem $\ref{characterizations}$ that 
for all sufficiently small $\tau >0$ and for every $s$ in $\reals^N$, 
\begin{eqnarray*}
0&\leq& \frac{1}{\tau} \left( \langle \nabla f(x+\tau s) - \nabla f(x), \tau s \rangle -m\langle \tau s, \tau s \rangle\right)\\
&=& \langle \nabla f (x+\tau s) -\nabla f(x),s \rangle -m\tau \langle s,s \rangle \\
&=&\int_0^\tau \frac{d}{d\lambda} \left( \langle \nabla f(x+\lambda s),s \rangle \right) d\lambda - \int_0^\tau \langle ms,s \rangle d\lambda \\
&=&\int_0^\tau \langle H_f(x+\lambda s)s,s \rangle -\langle ms,s \rangle d\lambda \\
&=&\int_0^\tau \langle \left(H_f(x+\lambda s) -mI\right)s,s \rangle d\lambda
\end{eqnarray*}
As the integrand in the last integral is continuous in $\lambda$, it follows from taking the limit as $\tau\to 0+$ that
$$
0\leq \langle \left(H_f(x) -mI\right)s,s \rangle,\textrm{ for all }s \textrm{ in }\reals^N.
$$
In other words, $H_f(x)-mI \succeq 0$, for all $x$ in $C$.

For the converse, assume that $f$ is twice continuously differentiable and that there is some $m>0$ such that $H_f(x)-mI \succeq 0$ for all $x$ in $C$. 
Then for all $x$ and $y$ in $C$,
\begin{eqnarray*}
f(y)&=&f(x) + \int_0^1  \frac{d}{d\tau}  \left( f (x+\tau (y-x)) \right) d \tau \\
&=&f(x) +  \int_0^1 \langle \nabla f(x+ \tau (y-x)), y-x \rangle d \tau\\
&=&f(x)+ \langle \nabla f (x), y-x \rangle +  \int_0^1 \int_0^\tau \frac{d}{d \lambda} \left( \langle \nabla f(x+ \lambda (y-x)), y-x \rangle \right) d \lambda d \tau\\
&=&f(x)+ \langle \nabla f (x), y-x \rangle +  \int_0^1 \int_0^\tau \langle H_f(x+\lambda (y-x))(y-x), y-x \rangle d \lambda d \tau \\
&\geq &f(x)+ \langle \nabla f (x), y-x \rangle + \int_0^1 \int_0^\tau \langle m (y-x),y-x \rangle   \rangle d \lambda d \tau \\
&=&f(x)+ \langle \nabla f (x), y-x \rangle + \frac{m}{2} ||y-x||^2
\end{eqnarray*}
Thus, we have shown that $(\ref{second})$ holds, and therefore Theorem $\ref{characterizations}$ implies that $f$ is strongly convex with parameter $m>0$.
\end{proof}

In particular, if $C$ is an open and convex subset of $\reals$ and if $f: C\to \reals$ is a twice continuously differentiable function, then $f$ is strongly convex with 
parameter $m>0$ if and only if $f''(x)\geq m$ for all $x$ in $C$. For example, this shows that if $f: (-1,1)\to \reals$ is defined as $f(x)=x^4$, then $f$ is not strongly convex with respect to any positive parameter $m>0$, because $f''(0)=0$.

Next we show that the point-wise supremum of a collection of convex  functions is 
also convex, and furthermore, that the point-wise supremum of a collection of strongly convex functions all having the same parameter, is also strongly convex with the  same parameter.
\begin{stel}\label{pointwise-max}
Let $C$ be a convex set, and assume that $f_\alpha:C\to \reals$ is convex, for all $\alpha$ in some (possibly infinite) index set $I$. Then 
$$
g(x):=\sup_{\alpha \in I} f_\alpha(x)
$$
is convex.

Furthermore, if there is some $m>0$ such that for all $\alpha$ in $I$, the function $f_\alpha$ is strongly convex with parameter $m$, then $g$ is also strongly convex with parameter $m$.
\end{stel}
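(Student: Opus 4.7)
The plan is to argue directly from the definitions of convexity and strong convexity, exploiting the elementary principle that a pointwise supremum only enlarges the left-hand side of each defining inequality, while the right-hand side consists of values at fixed points that are automatically bounded above by $g(x_1)$ and $g(x_2)$.

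First, for the convexity claim, I would fix arbitrary $x_1, x_2 \in C$ and $\lambda \in [0,1]$. For each index $\alpha \in I$, I chain two inequalities: convexity of $f_\alpha$ gives
$$
f_\alpha(\lambda x_1 + (1-\lambda) x_2) \leq \lambda f_\alpha(x_1) + (1-\lambda) f_\alpha(x_2),
$$
and the defining bounds $f_\alpha(x_i) \leq g(x_i)$ for $i=1,2$ upgrade this to
$$
f_\alpha(\lambda x_1 + (1-\lambda) x_2) \leq \lambda g(x_1) + (1-\lambda) g(x_2).
$$
Taking the supremum over $\alpha$ on the left-hand side — the right-hand side is already independent of $\alpha$ — produces the convexity inequality for $g$.

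For the strong convexity claim, I would run essentially the same argument, but starting from the strong convexity inequality for $f_\alpha$ with parameter $m$, which introduces the additional term $-\frac{m}{2}\lambda(1-\lambda)\|x_1-x_2\|^2$. This term does not involve $\alpha$ at all, so it passes through the supremum unchanged, and the argument concludes in exactly the same fashion.

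The only delicate point — and it is really more bookkeeping than obstacle — is the possibility that $g$ takes the value $+\infty$ at some points of $C$. Under the usual convention that $r \cdot (+\infty) = +\infty$ for $r > 0$, the inequalities remain meaningful in the extended real line; alternatively, one may simply restrict attention to the subset of $C$ on which $g$ is finite (which is automatically convex as a consequence of the inequality we just derived). I do not anticipate any genuinely hard step: the proof is essentially a one-line manipulation of the supremum, and its value lies in recording a general principle that is used twice in the main body of the paper (in the proof of Theorem \ref{main} and in the triangular-matrix example of the Discussion).
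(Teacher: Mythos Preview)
Your proposal is correct and follows essentially the same route as the paper: bound $f_\alpha(\lambda x_1+(1-\lambda)x_2)$ above by $\lambda g(x_1)+(1-\lambda)g(x_2)$ (the paper interposes $\sup_\alpha\bigl(\lambda f_\alpha(x_1)+(1-\lambda)f_\alpha(x_2)\bigr)$ as an intermediate step, but this is cosmetic), then take the supremum over $\alpha$ on the left; the strong-convexity case is handled identically since the extra term is independent of $\alpha$. Your remark about $g$ possibly taking the value $+\infty$ is not addressed in the paper but is a reasonable aside.
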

\begin{proof}
Assume first that all $f_\alpha$ are convex, for all $\alpha$ in $I$. Then for all $\alpha$ in $I$, all $x_1$ and $x_2$ in $C$ and $\lambda$ in $[0,1]$, holds that 
\begin{eqnarray*}
f_\alpha(\lambda x_1 +(1-\lambda)x_2)&\leq & \sup_{\alpha \in I} \left(\lambda f_\alpha(x_1)+(1-\lambda)f_\alpha (x_2) \right) \\
&\leq&\lambda \sup_{\alpha \in I} f_\alpha (x_1) + (1-\lambda)\sup_{\alpha \in I}f_\alpha (x_2)
\end{eqnarray*}
Taking the supremum over all $\alpha$ in $I$ yields that $g$ is convex.

Next assume that there is some $m>0$ such that
every $f_\alpha$ is strongly convex with parameter $m$, for all $\alpha$ in $I$. Then for all $\alpha$ in $I$, all $x_1$ and $x_2$ in $C$ and $\lambda$ in $[0,1]$, holds that 
\begin{eqnarray*}
f_\alpha(\lambda x_1 +(1-\lambda)x_2)&\leq & \sup_{\alpha \in I} \left(\lambda f_\alpha(x_1)+(1-\lambda)f_\alpha (x_2) \right) -\frac{1}{2}m\lambda(1-\lambda)||x_1-x_2||^2\\
&\leq&\lambda \sup_{\alpha \in I} f_\alpha (x_1) + (1-\lambda)\sup_{\alpha \in I} f_\alpha (x_2) -\frac{1}{2}m\lambda(1-\lambda)||x_1-x_2||^2.
\end{eqnarray*}
Taking the supremum over all $\alpha$ in $I$ yields that $g$ is strongly convex with parameter $m$.

\end{proof}

It is natural to ask if the point-wise supremum of a collection of strictly convex functions is strictly convex. While it is easy to see that this is true if the collection is finite, it is not necessarily true if the collection is infinite. Example: Let $n\in \naturals$ and suppose that $f_n:[-1,1] \to \reals$ is defined as $f_n(x)=(x^2-1)/n$. Clearly, each $f_n$ is strictly convex, but $g(x)=\sup_{n} f_n(x)=0$ for all $x$ in $[-1,1]$, and this function is convex but not strictly convex.

Finally, we 
recall the importance of strict convexity on the uniqueness of minimizers.

\begin{stel}\label{unique-min}
Let $C$ be a convex set in $\reals^N$, 
and assume that $f:C\to \reals$ is strictly convex. 
Then $f$ has at most one global minimizer in $C$.
\end{stel}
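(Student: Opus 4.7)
The plan is to argue by contradiction. I would assume that $f$ has two distinct global minimizers $x_1$ and $x_2$ in $C$, with $x_1 \neq x_2$, and set $m := f(x_1) = f(x_2) = \inf_{x \in C} f(x)$.

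Next, I would form the midpoint $z := \tfrac{1}{2} x_1 + \tfrac{1}{2} x_2$. Since $C$ is convex and $x_1, x_2 \in C$, we have $z \in C$, so $f(z)$ is defined and satisfies $f(z) \geq m$ by the minimality of $m$. On the other hand, applying strict convexity of $f$ with the choice $\lambda = 1/2$ (which lies in the open interval $(0,1)$ as required by the definition, and which applies because $x_1 \neq x_2$) yields
\begin{equation*}
f(z) < \tfrac{1}{2} f(x_1) + \tfrac{1}{2} f(x_2) = m.
\end{equation*}
This strict inequality contradicts $f(z) \geq m$, so no two distinct global minimizers can exist.

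I do not expect any substantial obstacle in this argument; the proof is essentially a direct application of the definition of strict convexity given earlier in the excerpt together with the minimality assumption. The only point that deserves a line of comment is that the midpoint $z$ belongs to $C$, which is exactly what convexity of the domain $C$ guarantees, so all the ingredients needed are already in place from the stated hypotheses.
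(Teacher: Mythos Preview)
Your proof is correct and follows essentially the same approach as the paper's: both argue by contradiction, assuming two distinct global minimizers and using strict convexity at an interior convex combination to produce a point with strictly smaller value. The only cosmetic difference is that you fix $\lambda = 1/2$ while the paper leaves $\lambda \in (0,1)$ arbitrary, and you make explicit that the midpoint lies in $C$ by convexity of the domain.
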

\begin{proof}
Assume that $x_1$ and $x_2$ are two distinct global minimizers of $f$ in $C$. Then $f(x_1)=f(x_2)$. However, since $f$ is strictly convex, it follows that $f(\lambda x_1 +(1-\lambda)x_2)<\lambda f(x_1)+(1-\lambda) f(x_2)=f(x_1)$, for all $\lambda$ in $(0,1)$. This contradicts that $x_1$ is a global minimizer of $f$. 
\end{proof}

\newpage

\end{document}